\newtheorem{theorem}{Theorem}
\newtheorem{corollary}{Corollary}
\newtheorem{prop}{Proposition}
\newtheorem{lemma}{Lemma}
\newtheorem{remark}{Remark}
\newtheorem{problem}{Problem}
\newcommand{\beq}{\begin{equation}}
\newcommand{\eeq}{\end{equation}}
\newcommand{\barr}{\left[\begin{array}}
\newcommand{\earr}{\end{array}\right]}
\newcommand{\rank}{\mbox{rank}\,}
\newcommand{\bpf}{\begin{proof}}
\newcommand{\epf}{\end{proof}}
\newcommand{\zz}{\ensuremath{\mathbb{Z}}}
\newcommand{\ff}{\ensuremath{\mathbb{F}}}
\newcommand{\quo}{\mbox{ \rm{div} }}
\newcommand{\rem}{\mbox{ \rm{mod} }}
\newcommand{\la}{\langle}
\newcommand{\ra}{\rangle}
\title{Solving Modular Linear Systems with a Constraint by parallel decomposition of the Smith form and extended Euclidean division modulo powers of primes divisors}
\author{Virendra Sule\\Professor of Practice\\Department of Computer Science and Engineering\\Indian Institute of Technology Hyderabad\\India\\virendra.sule@prjt.cse.iith.in}
\begin{document}
\maketitle

\begin{abstract}
    Integral linear systems $Ax=b$ with matrices $A$, $b$ and solutions $x$ are also required to be in integers, can be solved using invariant factors of $A$ (by computing the Smith Canonical Form of $A$). This paper explores a new problem which arises in applications, that of obtaining conditions for solving the Modular Linear System $Ax=b\rem n$ given $A,b$ in $\zz_n$ for $x$ in $\zz_n$ along with the constraint that the value of the linear function $\phi(x)=\la w,x\ra$ is coprime to $n$ for some solution $x$. In this paper we develop decomposition of the system to coprime moduli $p^{r(p)}$ which are divisors of $n$ and show how such a decomposition simplifies the computation of Smith form. This extends the well known index calculus method of computing the discrete logarithm where the moduli over which the linear system is reduced were assumed to be prime (to solve the reduced systems over prime fields) to the case when the factors of the modulus are prime powers $p^{r(p)}$. It is shown how this problem can be addressed efficiently using the invariant factors and Smith form of the augmented matrix $[A,-p^{r(p)}I]$ and conditions modulo $p$ satisfied by $w$, where $p^{r(p)}$ vary over all divisors of $n$ with $p$ prime.  
\end{abstract}

\noindent
\emph{Keywords}: Modular linear algebra, Smith form of integral matrices, Chinese Remainder Theorem (CRT).

\section{Introduction}
The problem of solving the non-homogeneous modular linear system
\beq\label{ModLS}
Ax=b\rem n
\eeq
arises in many computational problems of Cryptography and Coding where, $n$ is a given modulus and $A,b$ are matrices over the modular ring $\zz_n$, such as for instance in the prime factorization of $n$ by quadratic residues and for \emph{discrete log} computation \cite{Pomm,Abhdas} over finite fields by the \emph{index calculus method}. In most such problems main practical hurdles are, the large size of the system (or the number of unknowns) along with the large size of the modulus $n$. For solving such large sized linear modular systems over large $n$ it would be highly desirable to reduce the problem by parallel computation using the Chinese remainder theorem (CRT) over moduli $p^{r(p)}$ which are coprime divisors of $n$. Surprisingly, such a reduction does not seem to have been discussed in the past literature for solving modular linear systems. A well known case of reduction to coprime factors of $n$ arises in the Baby Step Giant Step method of solving the discrete log over a cyclic group with known prime factorization of the group order $n$. This computation of the discrete log over a cyclic group of large order $n$ has been shown to be parallely decomposable over groups of order $p$, the prime divisors of $n$, well known as the Pohlig-Hellman reduction \cite{PohHel}. On the other hand, reduction of the linear system arising in the index calculus method of computing the discrete logarithm has not been known when the group order has prime power factors. The reduction in the number of variables is usually achieved by special number field sieves \cite{PadChak} in factorisation problems.  In most discrete log computations using index calculus the prime is of the form $2q+1$ where $q$ is a large prime. Hence the linear algebra step (which leads to solving a modular linear system) of solving the logarithms of the factor base by a modular linear system is performed modulo $q$ and modulo $2$ and the result is computed by CRT. Hence the reduced systems are defined over finite prime fields. But for general modulus $n$ the reduction of the linear system requires solving for moduli $p^r$ when $r>1$. In this paper we develop a reduction of the Smith form computation of an extended matrix for solving the modular system (\ref{ModLS}) over moduli $p^r$. Hence the algorithm results in parallel decomposition necessary for solving the system in $p$-adic arithmetic. 

In fact, in this paper we consider the modular system problem (\ref{ModLS}) with the further constraint that a given linear function $\phi(x)=\la w,x\ra$ is coprime to $n$. Such a problem arises in Cryptanalysis, \cite{SuleNewAtt,SuleNonLRR}, in the problem of inversion of sequences of $m$-tuples over Cartesian products of binary or small prime $p$ fields, $\ff_p^m$ or that of local inversion of non-linear maps over $\ff_p^m$ when the complexity of the sequences is considered in terms of Linear Recurrence Relations (LRR) modulo $p^m$. Previous cases of LRR as well as non-linear RRs over the finite field $\ff$ are discussed in \cite{SuleComplete,SuleNewAtt,SuleNonLRR}. For instance, when the sequence $\{s_k,k=0,1,2,\ldots\}$ has minimal linear recurrence, the unknown co-efficients of the minimal polynomial $\{x_0,\ldots,x_{(m-1)}\}$ satisfy a linear system defined by the Hankel matrix of the sequence and the invertibility of the sequence requires that $x_0\neq 0$ which is a special case of a linear function $\la w,x\ra=x_0\neq 0$ over the field of the sequence. When the LRR is considered over modulo $n$ then the corresponding condition becomes $x_0$ invertible modulo $n$. Hence the problem considered in this paper is a general problem of solving the system (\ref{ModLS}) with the constraint that a given linear function $\la w,x\ra$ is coprime to $n$.

There are two main hurdles with this problem of computing LRRs of sequences modulo $p^r$. First one is that the order of a LRR of a pseudorandom sequence over $\ff_p$ can be exponential, comparable to $p^m$. The second theoretical hurdle is that the linear system modulo $p^r$ for $r>1$ required to be solved for computing the LRR is no more over a finite field. Hence linear algebra methods (over fields) do not apply. Hence any computational method which can compute the Linear Complexity of such sequences modulo $p^r$ has to be scalable for large sizes of $p^r$. The method of decomposition proposed in this paper utilizes special version of Smith form of matrices over $p^r$ and is theoretically suitable to achieve the objective of efficient parallel computation using $p$-adic arithmetic.

\subsection{Summery of contribution}
Consider a brief summary of contributions made in this paper and the plan of presentation.
\begin{enumerate}
    \item Solution of the modular system (\ref{ModLS}) is first addressed in Section 2 using the Smith canonical form for integral matrices. An alternative direct linear algebra method of solving such a system by Gaussian reduction fails even for a simplest system $ax=b\rem n$ when $a,n$ are not coprime without first using the Euclidean division to compute the Greatest Common Divisor (GCD) of $a,n$. Solution of the constrained system (\ref{Eq:ModLSwithPhi}) is obtained in terms of conditions related to the prime divisors of $n$ and the unimodular transformations arising in the Smith form in Theorem \ref{Th:Solnwithphi}.
    \item The modular system solution can be decomposed as shown in Theorem \ref{Th:CRTversion} in terms of its residues by using the CRT by decomposing the system modulo $p^{r(p)}$ for the set of all prime divisors $p$ of $n$. However the resultant systems modulo $p^r$ are also not solvable by linear algebra over fields when there is an $r(p)>1$. Such a system then can be solved by the special Smith form for the augmented matrix $[A,-p^rI]$ as shown in Theorem 3.
    \item Next it is shown in Lemma \ref{Lem:Bezout1} that the Smith form of $[A,-p^rI]$ can be efficiently computed by the solution of the Bezout identity modulo $p^r$. This computation involves modulo $p$ divisions only by which the computation of GCD of elements of $A$ and $p^r$ is much simpler in computation compared to using Euclidean division to solve the Bezout identity. This is the crux of the contribution of this paper which shows how the direct solution of the Bezout identity modulo $p^r$ by $p$-adic arithmetic is more efficient to compute the Smith form for special augmented matrix $[A,-p^rI]$ than using the Euclidean division.
    \item The above technique leads to the methodology of reducing the Smith form computation of $[A,-nI]$ to $[A,-p^rI]$ using only modulo $p$ computation. Hence the methodology is a local prime factor decomposition analogous to the decomposition known as Pohlig-Hellman reduction of the Baby Step Giant Step method of computing the discrete logarithm over cyclic groups. Such a reduction has been missing in the literature since the publication of \cite{PohHel}. 
    \item This paper shows how use of $p$-adic arithmetic for computation of the transform of the augmented matrix $[A,-p^rI]$ to Smith form using the Bezout identity results into an efficient computation compared to the Euclidean division and Smith form computation of $[A,-nI]$ by parallel computation of Smith forms of $[A,-p^rI]$. Further, the computation of Smith form of $[A,-p^rI]$ is also accomplished efficiently for known primes $p$ using $p$-adic arithmetic. This is a computational insight which reverses the usual computation of Bezout identity for the of $\gcd{a,p^r}$ by using Euclidean division. It shows that direct $p$-adic arithmetic requires only modulo $p$ computations while Euclidean division to find this GCD requires division with respect to multiple divisors and dividends.  
    \item In the final section, Section 4, we provide a solution of the constrained linear system (\ref{LSoverK}), (\ref{phioverK}) over a field. 
    \item Appendix A, treats the case of solving a linear system modulo $q^r$ where $q$ is the power $p^d$ of a prime denoting the $p$-adic byte representation of numbers modulo $q^r$. The resulting algorithm shows that the Smith form computation modulo $q^r$ can be carried out in efficient byte operations pre-defined in the computer. 
    \item For solving modular systems, the finite field linear algebra approach is not applicable when there are prime divisors of $n$ with powers larger than unity. Hence the method proposed in this paper provides a completion of this old problem of linear algebra modulo $n$ by using $p$-adic arithmetic and reducing to moduli $p^r$.
\end{enumerate}
The above points briefly summarize the contributions of this paper. We focused mainly on the theory behind the computation and have omitted numerical illustrations, which are truly out of scope and would have required a much longer article.  This paper is based on the arxiv version \cite{SuleModLS}.

\subsection{Linear modular system with a constraint}
Condition for solvability and solutions of the modular system (\ref{ModLS}) can be obtained in terms of a nonhomogeneous \emph{Integral Linear System} of the form
\beq\label{IntegralLS}
Ax=b
\eeq
where $A,b$ are integral matrices and solutions $x$ sought are also integral. A well known approach to solving such integral systems is to reduce $A$ and $[A,b]$ to Hermit normal form \cite{Stor,WikiHNF}. A stronger normal form than the Hermit form is the Smith canonical form which is diagonal. Existence condition and all integral solutions of (\ref{IntegralLS}) can be described in terms of the Smith canonical form of $A$. In this paper we show how to find conditions to solve the modular linear system (\ref{ModLS}) with the additional constraint that $\la w,x\ra$ is coprime to $n$, using the Smith form reduction and the Chinese Reminder Theorem (CRT). This problem is stated formally as,

\begin{problem}\label{Pr:ModLSwithPhi}
Given a modulus $n$ (positive integer), matrix $A$ over $\zz_n^{k\times l}$, $b$ over $\zz_n^k$ and a linear function $\phi(x)=\la w,x\ra:\zz_n^l\rightarrow\zz_n$, find the conditions for the existence of solutions to the following system with the constraint
\beq\label{Eq:ModLSwithPhi}
Ax=b\mod n\mbox{ and $\phi(x)$ is coprime to $n$}
\eeq
Find a characterization of all solution $x\rem n$ when the conditions are satisfied and compute one of the solutions.
\end{problem}

In the following sections we show how this extended problem \ref{Pr:ModLSwithPhi} can be solved by using the Smith form reduction of an augmented matrix, the extended Euclidean division and the CRT. 

\subsection{Efficient computation by reduction of Smith form modulo $p^r$}
As briefly pointed out above in Section 1.1, Summery of Contributions, the main drivers of efficiency, in computation of the Smith form for solving the modular system (\ref{ModLS}) and subsequent checking of existence conditions for solving the constraint on $\phi(x)$ in Problem \ref{Pr:ModLSwithPhi} are
\begin{enumerate}
\item Use of CRT which allows decomposition of the problem into independent sub-problems over the moduli $p^{r(p)}$ corresponding to coprime divisors of $n$. Hence the solution has a natural parallel computational decomposition.
\item Further, the Smith form computation of the resulting augmented matrix is vastly simplified since only gcds of the form $p^k$ are needed. This is because gcd computation of a system of integers $a_1,a_2,\ldots,p^r$ for a known prime $p$ only involves discovering the largest power $p^k$ which divides all the $a_i$. Hence explicit Euclidean division by general integers is avoided.
\item Another important issue leading to the difficulty of solving modular liner system modulo $n$ arises when $n$ has divisors of the form $p^{r(p)}$ with $p$ prime and $r(p)>1$. For such cases solving the reduced linear systems $Ax=b\rem p$ over prime fields $\ff_p$ does not lead to a solution of the system modulo $n$. Hence in such cases, efficient (black box) linear algebra method of \cite{Wied} cannot be used. Such cases of $r(p)>1$ the modular systems have to be necessarily addressed using the Smith form (or invariant factors) of the matrix $[A,-p^{r(p)}I]$. Reduction of Smith form computation over moduli $p^r$ using parallel computation (and CRT) has surprisingly not appeared in the past literature.  
\end{enumerate}

Due to the above reasons, the solution of the problem is greatly simplified by parallel reduction over moduli $p^r$ and hence the computations are scalable for large number of unknown variables as well as large size of $n$. In the case when the powers $r(p)=1$ for all prime divisors $p$ of $n$ the reduction to moduli $p$ of the systems lead to independent linear systems over $\ff_p$. In such a case the individual systems can be solved very efficiently by the Wiedemann's method known as Black box Linear Algebra \cite{Wied,Vonzur}. Unfortunately, Wiedemann's method based on minimal polynomial computation of the matrix is not applicable when the matrix is not square and the modulus is $p^r$ with $r>1$ (system is not over a field).
\section{Solution of the integral linear system in terms of the Smith form}
First we consider the integral linear system (\ref{IntegralLS}) and describe the condition for the existence of solutions, as well as determine all solutions using the well known Smith form of $A$. The Smith form computation of a matrix over a Euclidean Domain such as $\zz$ and polynomials $K[X]$ over a field $K$ is a generalization of the GCD computation of set of integers and has well known algorithms \cite{Stor,Vonzur,Brad,WikiHNF}. Importance of Hermit and Smith forms of matrices have also been pointed out in the past \cite{Hung}. Hence following lemma is stated without proof.

\begin{lemma}
For an (integral) matrix $A$ in $\zz^{k\times l}$ there exist unimodular matrices $P$ in $\zz^{k\times k}$ and $Q$ in $\zz^{l\times l}$ such that 
\[
PAQ=\hat{S}
\]
where the integral matrix $\hat{S}$ is of the form
\[
\hat{S}=
\barr{ll}
S & 0\\0 & 0
\earr
\]
$S$ is an $r\times r$ diagonal matrix when $A$ has \emph{rank} $r$ (i.e. the largest order of nonzero determinant of a submatrix of $A$ is $r$)
\[
S=\mbox{diag}\,[f_1,f_2,\ldots,f_r]
\]
where $f_i$ are unique positive integers such that $f_1|f_2$, $f_2|f_3$,\ldots,$f_{r-1}|f_r$.
\end{lemma}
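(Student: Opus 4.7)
The plan is to prove this by induction on $\min(k,l)$, constructively reducing $A$ via elementary row and column operations over $\mathbb{Z}$. The operations available --- swapping two rows/columns, negating a row/column, and adding an integer multiple of one row/column to another --- are each realized by multiplication with a unimodular matrix, so composing them at the end yields the desired $P$ and $Q$. The base case (a single row or column) reduces to computing the $\gcd$ of its entries, a standard extended-Euclidean calculation, and the inductive step passes to the trailing $(k-1)\times(l-1)$ submatrix after a pivot has been isolated at position $(1,1)$.

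The main reduction step produces a pivot at $(1,1)$ that divides every other entry of $A$. First I would permute the nonzero entry of smallest absolute value to position $(1,1)$ and call it $f$. For each other entry in the first row (respectively column), I would write it as $qf+r$ with $|r|<|f|$ and subtract $q$ times column $1$ (respectively row $1$) to leave $r$ in its place; if $r\neq 0$, move it into position $(1,1)$ and repeat. Because $|f|$ strictly decreases whenever this loop re-enters, after finitely many iterations the first row and column are zero outside $(1,1)$. If the resulting $f$ still fails to divide some entry $a_{ij}$ of the trailing block, I would add row $i$ to row $1$ and restart the loop, which again drops $|f|$ strictly, so eventually $f$ divides every remaining entry. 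Setting $f_1:=f$ and applying the induction hypothesis to the trailing block produces the diagonal form, and the divisibility chain $f_1\mid f_2\mid\cdots\mid f_r$ falls out because $f_1$ already divides every entry handed to the recursion, and this ``head divides tail'' property is preserved by the inductive step.

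For uniqueness I would use the $k$-th determinantal divisor $d_k(A)$, defined as the $\gcd$ of all $k\times k$ minors of $A$. A short Cauchy--Binet argument shows that $d_k$ is invariant under left/right multiplication by unimodular matrices, and direct computation on $\hat{S}$ gives $d_k(\hat{S})=f_1 f_2\cdots f_k$. Hence $f_k = d_k(A)/d_{k-1}(A)$ is intrinsic to $A$, with signs absorbed into $P$ or $Q$ to make the $f_i$ positive; this also confirms that $r$ equals the rank of $A$, since $d_r(A)\neq 0$ and $d_{r+1}(A)=0$. The main obstacle is the careful bookkeeping inside the pivot-reduction loop: one must verify termination separately for the intra-row/column clearing phase and for the ``$f$ does not divide the trailing block'' phase, and ensure that when the recursion returns from the submatrix the global divisibility chain has not been broken --- this is handled by the invariant that $f_1$ divides every entry visible at the next level of recursion.
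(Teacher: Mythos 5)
Your argument is the standard constructive proof of the Smith normal form, and it is correct: the Euclidean pivot-reduction loop governed by the strictly decreasing quantity $|f|$, the ``add row $i$ to row $1$'' repair step when the pivot fails to divide the trailing block, the invariant that $f_1$ divides every entry handed to the recursion (which yields the chain $f_1\mid f_2\mid\cdots\mid f_r$), and the uniqueness argument via determinantal divisors $f_k=d_k(A)/d_{k-1}(A)$ are all sound and complete. Be aware, however, that there is no in-paper proof to compare against: the paper explicitly states this lemma \emph{without proof}, treating it as a classical fact and deferring to its references on Hermite and Smith forms, so what you have written is precisely the standard argument those sources contain rather than an alternative to anything in the text. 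Two points you leave implicit are harmless but worth a sentence each in a polished write-up: the degenerate case $A=0$ (where $r=0$ and $\hat{S}=0$, so the statement is vacuous), and the fact that clearing the first row can re-introduce nonzero entries into the first column and vice versa --- this is exactly the phenomenon your decreasing-$|f|$ termination measure is designed to absorb, but it deserves to be said explicitly since it is the reason the loop is a loop and not a single pass.
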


The matrix $\hat{S}$ is said to be the Smith canonical form of $A$ and the numbers $f_i$, $i=1,\ldots r$ are called its \emph{invariant factors}. Consider the linear system (\ref{IntegralLS}) and let $P$ in $\zz^{k\times k}$ and $Q$ in $\zz^{l\times l}$ be unimodular matrices which bring $A$ into the Smith form as above. Define $\tilde{b}=Pb$ and $\tilde{x}=Q^{-1}x$ with partitions
\[
\tilde{x}=
\barr{l}x_1\\x_2\earr
\mbox{ and }
\tilde{b}=
\barr{l}b_1\\b_2\earr
\]
where the vectors $x_1$ and $b_1$ are in $\zz^r$, $b_2$ is in $\zz^{k-r}$, $b_1=[\beta_1,\beta_2,\ldots,\beta_r]^T$.

\begin{theorem}\label{Th:SolnbySmithform}
The integral system (\ref{IntegralLS}) has a solution iff 
\begin{enumerate}
    \item $b_2=0$. 
    \item $f_i|\beta_i$ for $i=1,\ldots r$. Let $d_if_i=\beta_i$. 
\end{enumerate}    
then all solutions of (\ref{IntegralLS}) are given by
\[
x=Q
\left[
\begin{array}{l}
d_1\\\vdots\\d_r\\x_2
\end{array}
\right]
\]
for an arbitrary integral $x_2$ in $\zz^{l-r}$.
\end{theorem}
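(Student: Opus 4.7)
The plan is to reduce the system $Ax=b$ to the much simpler system $\hat{S}\tilde{x}=\tilde{b}$ via the unimodular transformation given in the preceding lemma, and then read off solvability and the full solution set directly from the diagonal structure.

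First I would multiply $Ax=b$ on the left by $P$ and insert $QQ^{-1}$ on the right of $A$, obtaining $PAQ(Q^{-1}x)=Pb$, i.e.\ $\hat{S}\tilde{x}=\tilde{b}$. Because $P$ and $Q$ are unimodular (invertible over $\zz$), the map $x\mapsto\tilde{x}=Q^{-1}x$ is an integral bijection between $\zz^l$ and $\zz^l$; similarly $Pb$ is integral iff $b$ is. Hence integral solutions $x$ of the original system correspond bijectively to integral solutions $\tilde{x}$ of the transformed system, which is the crucial step that justifies reading off the answer from $\hat{S}$.

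Next I would exploit the block-diagonal shape of $\hat{S}$. Using the partitions $\tilde{x}=[x_1;x_2]$ and $\tilde{b}=[b_1;b_2]$, the transformed system decomposes into the two independent conditions
\[
Sx_1=b_1 \qquad\text{and}\qquad 0\cdot x_2=b_2.
\]
The second equation is solvable iff $b_2=0$, which is condition (1). The first equation, since $S=\mathrm{diag}(f_1,\ldots,f_r)$, splits further into the scalar equations $f_i\,x_{1,i}=\beta_i$ for $i=1,\ldots,r$. Each such scalar equation has an integral solution iff $f_i\mid\beta_i$, giving condition (2), and when this holds the unique value is $x_{1,i}=d_i:=\beta_i/f_i$. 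The component $x_2\in\zz^{l-r}$ is entirely unconstrained.

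Finally, I would recover $x$ from $\tilde{x}$ by $x=Q\tilde{x}$, which yields the stated formula. Conversely, any $x$ of that form gives a valid $\tilde{x}$ satisfying $\hat{S}\tilde{x}=\tilde{b}$, hence $Ax=b$; this supplies the "all solutions" claim. There is no real obstacle here: the only point that warrants care is the bidirectional integrality of the substitution, which is immediate from $P,Q\in GL(\zz)$, together with the observation that $Q^{-1}$ acts as an isomorphism of $\zz^l$ so that the free parameter $x_2$ genuinely ranges over all of $\zz^{l-r}$ in the original coordinates.
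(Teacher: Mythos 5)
Your proposal is correct and follows essentially the same route as the paper: transform to $\hat{S}\tilde{x}=\tilde{b}$ via the unimodular $P,Q$, read off $b_2=0$ and the divisibility conditions $f_i\mid\beta_i$ from the diagonal block, and recover all solutions as $x=Q\tilde{x}$ with $x_2$ free. Your explicit remark that unimodularity gives a bijection between integral solutions of the two systems is a point the paper leaves implicit, but the argument is the same.
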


\begin{proof}
Denoting $\tilde{x}=Q^{-1}x$ the equation and transforming the equation (\ref{IntegralLS}) to the system of equations $(PAQ)(Q^{-1})x=Pb$ which is 
\[
Sx_1=b_1,\mbox{   }0=b_2
\]
in which $x_2$ does not appear. Since $S$ is diagonal with invariant factor $f_i$ on the $i$-th row, $x_1$ is integral iff $f_i|\beta_i$ for $i=1,2,\ldots,r$. Hence $x_1=[d_1,d_2,\ldots,d_r]$ and $x_2$ an arbitrary $\zz^{l-r}$ tuple gives all the solutions of the transformed equation $\tilde{x}=[x_1,x_2]^T$. Hence all solutions are 
\[
x=Q\tilde{x}
\]
as claimed.
\end{proof}

\subsection{Solution of the modular linear system}
The modular system (\ref{ModLS}) has a solution $x$ in $\zz^l$ iff there is an integral $k$-tuple $y$ in $\zz^k$ such that 
\[
Ax-Iyn=b
\]
where $I$  is an $k\times k$ identity matrix. Hence we get an equivalent integral system
\beq\label{Eq:eqIntegralsystem}
\barr{ll}
A & -nI
\earr
\barr{l}
x\\y
\earr
=b
\eeq
Then we can determine the solutions of the modular system by solving (\ref{Eq:eqIntegralsystem}) using the Smith form of $[A,-nI]$. 

\begin{lemma}\label{Lem:eqSmithform}
    There exist unimodular matrices $P$ in $\zz^{k\times k}$, $Q$ in $\zz^{(k+l)\times (k+l)}$ such that
    \[
    P[A,-nI]Q=[S,0]
    \]
    where $S$ is a $k\times k$ diagonal nonsingular matrix of invariant factors of $[A,-nI]$ and $[S,0]$ is a $k\times (k+l)$ matrix with $l$ zero columns added. Denote $\tilde{b}=Pb$. The system (\ref{Eq:eqIntegralsystem})
    has a solution iff $x_0=S^{-1}Pb$ is integral which is equivalent to $f_i|\beta_i$. For an arbitrary integral $l$ tuple $x_1$, denote $\tilde{x}=[x_0^T,x_1^T]^T$. Then 
    \beq\label{allsolns}
    \barr{l}x\\y\earr=
    Q\barr{l}x_0\\x_1\earr
    \eeq
    are all solutions of (\ref{Eq:eqIntegralsystem}).
\end{lemma}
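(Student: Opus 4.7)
The plan is to reduce this lemma to a direct application of Theorem \ref{Th:SolnbySmithform} to the augmented matrix $[A,-nI]$. The first step is to observe the structural fact that makes the Smith form here especially clean: the $k\times(k+l)$ matrix $[A,-nI]$ has full row rank $k$, because its right block $-nI$ is already an invertible $k\times k$ integral matrix with determinant $(-n)^k\neq 0$. Consequently the Smith reduction stated in the preceding lemma yields unimodular $P\in\zz^{k\times k}$ and $Q\in\zz^{(k+l)\times(k+l)}$ with $P[A,-nI]Q=\hat{S}$ in which, by taking rank $r=k$ in the general shape of $\hat{S}$, the zero-row block is absent; this leaves $\hat{S}=[S,0]$ with $S=\mbox{diag}(f_1,\ldots,f_k)$ nonsingular and $l$ trailing zero columns, exactly as claimed.

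Next I would translate the solvability conditions of Theorem \ref{Th:SolnbySmithform} to this setting. Since the rank equals the number of rows, the partition of $\tilde{b}=Pb$ consists only of the top block $b_1=[\beta_1,\ldots,\beta_k]^T$ and has no $b_2$ component, so the vanishing condition $b_2=0$ is vacuous. The remaining divisibility conditions $f_i\mid\beta_i$ are equivalent, because $S$ is diagonal with the invariant factors $f_i$ on its diagonal, to the statement that $S^{-1}Pb$ is an integer vector; this is exactly the condition $x_0=S^{-1}Pb\in\zz^k$ in the claim.

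Finally, the parametrisation of all solutions follows directly from Theorem \ref{Th:SolnbySmithform} with $r=k$ and number of free components $(k+l)-k=l$. The transformed equation $[S,0]\tilde{x}=Pb$ is satisfied precisely by $\tilde{x}=[x_0^T,x_1^T]^T$ with $x_1\in\zz^l$ arbitrary, and pulling back through $Q$ yields the full solution set $[x^T,y^T]^T=Q[x_0^T,x_1^T]^T$ of the integral system (\ref{Eq:eqIntegralsystem}). The only delicate point is the first step: once one recognises that $[A,-nI]$ automatically has rank equal to its number of rows, the rest of the argument is bookkeeping on top of the previous theorem, so I do not anticipate any real obstacle.
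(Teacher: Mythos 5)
Your proposal is correct and follows essentially the same route as the paper: apply the general Smith-form solvability result (Theorem \ref{Th:SolnbySmithform}) to the augmented matrix $[A,-nI]$, note that its rank is $k$ so the $b_2=0$ condition is vacuous and $\hat{S}=[S,0]$ with $S$ nonsingular, and read off the divisibility condition and the parametrisation through $Q$. Your explicit justification that the rank equals $k$ (via the invertible $-nI$ block) is a small detail the paper merely asserts, but it does not change the argument.
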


\begin{proof}
Since the modular system (\ref{ModLS}) has a solution iff the equivalent system (\ref{Eq:eqIntegralsystem}) has an integral solution $x,y$, from the theorem (\ref{Th:SolnbySmithform}) it follows that there exist unimodular matrices $P,Q$ which transform $[A,-nI]$ to $[S,0]$ since integral rank $[A,-nI]$ is equal to $k$. The matrix $S$ is the diagonal nonsingular matrix of invariant factors of $[A,-nI]$. The integral system (\ref{Eq:eqIntegralsystem}) is then transformed to
\[
Sx_0=\tilde{b}
\]
The existence of a solution then follows iff $S$ divides $\tilde{b}$ which makes $x_0=S^{-1}\tilde{b}$ integral. Since the transformed equation does not involve $x_1$ this $\zz^l$ tuple is free. Hence all solutions $x,y$ are described by (\ref{allsolns}). 
\end{proof}

For ease of exposition consider further notations. Let the matrix $Q$ be partitioned and denoted as
\[
Q=
\barr{ll}Q_0 & Q_1\\
Q_2 & Q_3\earr
\]

Necessary and sufficient condition for solvability for the modular system (\ref{ModLS}) is thus obtained in above lemma as, the diagonals of $S$ must divide all the components of $\tilde{b}$ at the corresponding indices. If $x_0=S^{-1}\tilde{b}$, then all solutions of the modular system (\ref{ModLS}) are described by
\beq\label{SolnofMLSwithphi}
x=(Q_0x_0+Q_1x_1)\rem n
\eeq
for arbitrary $\zz^l$-tuple $x_1$. To express the conditions for uniqueness of solutions of the system (\ref{ModLS}) we need a definition of $\mbox{unimodular-}\rank_{\zz_n}A$ of a matrix $A$ over $\zz_n$. Such a rank of $A$ is $r$ if $r$ is the largest size of minors of $A$ such that if $\{a_1,a_2,\ldots,a_M\}$ is the set of all such minors then the $GCD(a_1,a_2,\ldots,a_M)$ is coprime to $n$. 

\begin{corollary}Following statements are equivalent
    \begin{enumerate}
    \item The modular system (\ref{ModLS}) has a unique solution $x$ in $\zz_n^l$. 
    \item $Q_1=0\rem n$
    \item $l\leq k$ and $\mbox{unimodular-}\rank_{\zz_n} A=l$
    \end{enumerate}
\end{corollary}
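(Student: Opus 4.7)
The plan is to prove (1)$\iff$(2) as an immediate consequence of the parametrization (\ref{SolnofMLSwithphi}), and then to establish (1)$\iff$(3) by reducing ``uniqueness modulo $n$'' to injectivity of $A$ as a map $\zz_n^l\to\zz_n^k$ and translating that injectivity into a condition on the invariant factors of $A$ itself (as distinct from those of $[A,-nI]$ used in Lemma \ref{Lem:eqSmithform}).

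For (1)$\iff$(2), Lemma \ref{Lem:eqSmithform} tells us that once a solution exists, the complete solution set mod $n$ is $\{(Q_0 x_0 + Q_1 x_1)\rem n : x_1\in\zz^l\}$ with $x_0=S^{-1}\tilde b$ fixed. Hence the solution is unique iff $Q_1 x_1\equiv 0\rem n$ for every $x_1\in\zz^l$, and testing against the standard basis vectors $e_i$ reduces this to $Q_1\equiv 0\rem n$. The converse is immediate.

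For (1)$\iff$(3), first note that assuming solvability, uniqueness of $x\rem n$ is equivalent to the kernel of $A$ modulo $n$ being trivial. Applying the Smith form $\hat P A\hat Q=D$ of $A$ over $\zz$, with $D$ the $k\times l$ diagonal matrix carrying the invariant factors $f_1\mid f_2\mid\cdots\mid f_r$ of $A$ on the first $r$ diagonal entries, the condition $Ax\equiv 0\rem n$ becomes $D y\equiv 0\rem n$ for $y=\hat Q^{-1}x$. Because $\hat Q$ is unimodular, triviality of this kernel mod $n$ is equivalent to (a) $r=l$, which forces $l\leq k$, and (b) each $f_i$ coprime to $n$. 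Finally, condition (b) matches the unimodular-rank definition via the identity $d_l(A)=f_1 f_2\cdots f_l$ for the $l$-th determinantal divisor: $\gcd(d_l(A),n)=1$ iff every $f_i$ is coprime to $n$.

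The main thing to be careful about is the negative direction of (1)$\Rightarrow$(3), where one must actually exhibit kernel elements. If $l>k$ the rational kernel of $A$ is nontrivial and a primitive integer kernel vector cannot be $\equiv 0\rem n$; if some $f_i$ shares a common factor with $n$, the vector $y=(n/\gcd(f_i,n))\,e_i$ satisfies $Dy\equiv 0\rem n$ but $y\not\equiv 0\rem n$, and $x=\hat Q y$ pulls this back to a nontrivial kernel element of $A$ mod $n$, contradicting uniqueness. Once these two constructions are in place, the chain (1)$\Rightarrow$(3)$\Rightarrow$(1) closes and, combined with (1)$\iff$(2), the corollary follows.
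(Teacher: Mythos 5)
Your proposal is correct, and for the $(1)\Leftrightarrow(3)$ part it takes a genuinely different route from the paper. For $(1)\Leftrightarrow(2)$ you and the paper argue identically from the parametrization (\ref{SolnofMLSwithphi}). For the third condition, the paper works prime by prime: it reduces the system modulo each prime divisor $p$ of $n$, observes that uniqueness mod $n$ forces the reduced matrix $A\rem p$ to have full column rank $l$ over $\ff_p$, and concludes that the GCD of the $l\times l$ minors is coprime to $n$; this gives only $1\Rightarrow 3$, and the paper in fact never proves any implication \emph{out of} statement 3, so its cycle of implications ($1\Rightarrow 3$, $1\Rightarrow 2$, $2\Rightarrow 1$) does not actually close the claimed three-way equivalence. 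You instead recast uniqueness as triviality of the kernel of $A$ modulo $n$, diagonalize $A$ itself (rather than $[A,-nI]$) via its own Smith form, and read off that the kernel mod $n$ is trivial iff $r=l$ and every invariant factor $f_i$ is coprime to $n$; the identity $d_l(A)=f_1\cdots f_l$ for the $l$-th determinantal divisor then translates this exactly into the paper's unimodular-rank condition, and your explicit kernel vectors ($\hat{Q}$ applied to a primitive rational-kernel vector when $r<l$, and $\hat{Q}\bigl((n/\gcd(f_i,n))e_i\bigr)$ when some $\gcd(f_i,n)>1$) supply the missing direction $3\Rightarrow 1$. What your approach buys is a complete two-way argument using a single global Smith form and no prime-by-prime case analysis; what the paper's approach buys is a more direct link to the familiar picture of full column rank over each residue field $\ff_p$, at the cost of leaving the equivalence only partially established. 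One small point to tidy in a final write-up: your ``negative direction'' paragraph should note that the case $r<l\leq k$ (all $l\times l$ minors zero, so the unimodular rank is below $l$ even though $l\leq k$) is handled by the same primitive-kernel-vector construction you use for $l>k$.
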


\begin{proof}
 Assume first statement holds. Consider the residue equation $Ax=b\rem p$ where $p$ is a prime divisor of $n$. If the system (\ref{ModLS}) has a unique solution then $x\rem p$ is the unique solution of the residue equation at $p$ for all $p|n$. Hence $\rank_{\zz_p}(A\rem p)=r=l$ which must be less than or equal to $k$. This implies that for every prime divisor $p$ of $n$, there exists a non-zero $l\times l$ minor of $A$ modulo $p$. Since an $l\times l$ minor of $(A\rem p)$ equals $l\times l$ minor of $A$ 
 modulo $p$, it follows that the GCD of all $l\times l$ minors of $A$ is not divisible by $p$ for all prime divisors of $n$, i.e. the GCD of $l\times l$ minors of $A$ is coprime to $n$. Hence the condition that $\mbox{unimodular-}\rank_{\zz_n}A=l$ follows. This proves that $1\Rightarrow 3$.

The expression of all solutions (\ref{SolnofMLSwithphi}) implies that $x$ is unique iff $x=Q_0x_0\rem n$ which implies that 
\[
Q_1x_1=0\rem n
\]
for all $x_1$ in $\zz_n^l$. Hence the solution is unique iff $Q_1\rem n=0$. Hence $1\Rightarrow 2$.

Assume now that condition $2$ holds. Since
\[
P[A,-nI]Q=[S,0]
\]
using the partition of $Q$ and noting that $Q_1=0$ we get
\[
P[A,-nI]
\barr{ll}Q_0 & 0\\Q_2 & Q_3\earr
=
[PAQ_0-nPQ_2,-nPQ_3]
\]
Hence (\ref{Eq:eqIntegralsystem}) can be written as
\[
[PAQ_0-nPQ_2,-nPQ_3]
\barr{l}x_0\\x_1\earr
=
Pb.
\]
Hence
\[
PAQ_0x_0=Pb\rem n
\]
Since $P$ and $Q_0$ are unimodular it follows that denoting $y_0=Q_0x_0$ the equation $Ay_0=b\rem n$ has a unique solution. Hence $2\Rightarrow 1$.

Hence equivalence of all the three statements is proved.
\end{proof}

\subsection{Solution of the constrained system}
Next consider the constraint $\phi(x)=\la w,x\ra$ required to be coprime to $n$ for some solution $x$ of (\ref{ModLS}) or equivalently that of (\ref{Eq:eqIntegralsystem}). Consider the $(l+k)$ tuple $[w^T,0^T]^T$ obtained by adding $k$ tuple of zero rows to $w$. Denote
\beq\label{w0w1def}
\tilde{w}=
\barr{l}w_0\\w_1\earr=Q^{T}\barr{l}w\\0\earr
\eeq
where $Q$ is the unimodular matrix in the Smith form decomposition of $[A,-nI]$, $w_0$ is the first $l$ rows of $\tilde{w}$ and $w_1$ is the remaining $k$ rows of $\tilde{w}$. Then we have
\[
\la w,x\ra=
\la\barr{l}w\\0\earr,Q\barr{l}x_0\\x_1\earr\ra=\la w_0,x_0\ra+\la w_1,x_1\ra
\]
Let $\{p_1,p_2,\ldots,p_r\}$ denote the set of all prime divisors of $n$. 
Since all solutions of the modular system (\ref{ModLS}) are given by (\ref{SolnofMLSwithphi}), the solutions which satisfy the constraint $<w,x>$ coprime to $n$ need to be characterized within this set of all solutions. Existence of such solutions satisfying the constraint and choice of $x_1$ to construct a solutionare described by the next Theorem.

\begin{theorem}\label{Th:Solnwithphi}
The linear system (\ref{Eq:ModLSwithPhi}) of Problem \ref{Pr:ModLSwithPhi} has a solution iff
\begin{enumerate}
    \item $x_0=S^{-1}\tilde{b}$ is integral and
    \item For all $j=1,2,\ldots r$, whenever $\la w_0,x_0\ra \rem p_j=0$, $w_1\rem p_j\neq 0$.
\end{enumerate}
All solutions $x$ are of the form (\ref{SolnofMLSwithphi}). To construct a solution $x_1$ may be chosen as follows:
\begin{enumerate}
\item $x_1=0$ if $\la w_0,x_0\ra\rem p_j\neq 0$ for all $j$.
\item If $\la w_0,x_0\ra \rem p_j=0$ for any $j$ then for all such $j$, $x_{1j}=x_1\rem p_j$ are chosen such that $\la w_1,x_{1j}\ra\rem p_j\neq 0$ and $x_1$ is obtained to satisfy the residues $x_{1j}=x_1\rem p_j$ and $x_1\rem p_j=0$ when $\la w_0,x_0\ra \rem p_j\neq 0$.
\end{enumerate}
\end{theorem}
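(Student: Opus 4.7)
The strategy is to reduce to Lemma \ref{Lem:eqSmithform}, which already characterizes the solutions of the unconstrained modular system (\ref{ModLS}). Without condition 1, that is $x_0=S^{-1}\tilde b$ integral, the system (\ref{ModLS}) itself is infeasible, so condition 1 is forced by necessity. The remaining task is to decide, among the family $x=(Q_0x_0+Q_1x_1)\rem n$ parameterized by $x_1\in\zz^l$, which members satisfy the coprimality constraint on $\phi(x)=\la w,x\ra$.

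First I would rewrite $\la w,x\ra$ in the transformed coordinates. Padding $w$ to the $(l+k)$-tuple $[w^T,0^T]^T$ and substituting the Smith-form change of variable $[x^T,y^T]^T=Q[x_0^T,x_1^T]^T$, the definition (\ref{w0w1def}) of $\tilde w=Q^T[w^T,0^T]^T=[w_0^T,w_1^T]^T$ yields
\[
\la w,x\ra \;=\; \la [w;0],\,Q[x_0;x_1]\ra \;=\; \la \tilde w,\,[x_0;x_1]\ra \;=\; \la w_0,x_0\ra+\la w_1,x_1\ra.
\]
The first summand is fixed by the data; only $x_1\in\zz^l$ is free.

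Next I would split the coprimality constraint prime by prime. Being coprime to $n$ is equivalent to $\la w_0,x_0\ra+\la w_1,x_1\ra\not\equiv 0\pmod{p_j}$ for every prime divisor $p_j$ of $n$, and by CRT the residues $x_1\rem p_j$ may be prescribed independently across $j=1,\dots,r$. At a fixed $p_j$ two cases arise. If $\la w_0,x_0\ra\not\equiv 0\pmod{p_j}$, then $x_1\equiv 0\pmod{p_j}$ already works. If $\la w_0,x_0\ra\equiv 0\pmod{p_j}$, the requirement becomes $\la w_1,x_1\ra\not\equiv 0\pmod{p_j}$, which is achievable iff $w_1\not\equiv 0\pmod{p_j}$, by activating a coordinate of $x_1\rem p_j$ at an index where $w_1\rem p_j$ is nonzero. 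This yields both the necessity and the sufficiency of condition 2 and produces the case-based recipe for $x_{1j}=x_1\rem p_j$ stated in the theorem; CRT then assembles a single integer $x_1$ realizing these residues, and substitution into (\ref{SolnofMLSwithphi}) delivers the desired $x$.

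I do not anticipate a deep obstacle, since the argument is essentially a CRT decoupling of a linear (non)vanishing condition. The only bookkeeping point to watch is that the splitting is performed at the \emph{primes} $p_j$ rather than at the prime powers $p_j^{r(p_j)}$: this is appropriate because coprimality to $n$ depends only on divisibility by each $p_j$, so working in $\prod_j \zz_{p_j}$ is sufficient, and the CRT assembly of $x_1$ need only fix its residues mod $p_j$, which is exactly what the theorem's construction does.
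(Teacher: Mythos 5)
Your proposal is correct and follows essentially the same route as the paper's own proof: expand $\la w,x\ra=\la w_0,x_0\ra+\la w_1,x_1\ra$ via the Smith-form change of coordinates, reduce the coprimality constraint to nonvanishing modulo each prime divisor $p_j$, handle the two cases at each $p_j$, and assemble $x_1$ by CRT. If anything, your per-prime case split states the necessity of condition 2 (that $w_1\equiv 0$ and $\la w_0,x_0\ra\equiv 0$ mod $p_j$ together kill every candidate $x_1$) slightly more explicitly than the paper does, but the argument is the same.
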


\begin{proof}
If the conditions hold then the modular system (\ref{ModLS}) has a solution $x$ by the first condition and $\la w,x\ra=\la w_0,x_0\ra+\la w_1,x_1\ra$ for an arbitrary integral $x_1$. 

Now it remains to show how the second condition is sufficient to satisfy the constraint that $\la w,x\ra$ is coprime to $n$. This constraint is satisfied iff $\la w,x\ra\rem p_j\neq 0$ for all prime divisors $p_j$ of $n$. If for some $j$, $\la w_0,x_0\ra\rem p_j\neq 0$, then $x_1=0$ gives a solution which satisfies $\la w,x\ra$ coprime to $n$. On the other hand if $\la w_0,x_0\ra\rem p_j=0$ for all $j$ and $w_1\rem p_j\neq 0$ for some $j$, then there exists $x_{1j}$ in $\zz_{p_j}$ such that $\la w_1\rem p_j,x_{1j}\ra\neq 0$. Hence by the CRT there exists $x_1$ which satisfies the residues
\[
\begin{array}{lclll}
x_1 & = & x_{1j} & \rem p_j & \mbox{ such that $w_1\rem p_j\neq 0$}\\
 & = & 0 & \rem p_j & \mbox{ whenever $\la w_0,x_0\ra\rem p_j\neq 0$}
\end{array}
\]

Conversely, suppose that there is a solution $x$ to the system (\ref{ModLS}) such that $\la w,x\ra$ is coprime to $n$. Then clearly the first condition holds and there exists an integer $q$ such that $q\la w,x\ra=1\rem n$. Hence $\la w,x\ra$ is invertible in $\zz_n$ and it follows from the CRT that $\la w,x\ra\rem p_j\neq 0$ for all prime divisors $p_j$ of $n$. This leads to the second condition. Hence, the conditions are necessary.
\end{proof}

Theorem above shows how the Problem (\ref{Pr:ModLSwithPhi}) can be solved by using the Smith form of $[A,-nI]$ and the knowledge of prime divisors of $n$.
\section{Solution modulo $q^r$ where $q=p^d$}
Next we consider the special case of the Problem \ref{Pr:ModLSwithPhi} when the modulus is $q^r$ where $q=p^d$ and $p$ is a fixed prime. Such a case arises when the modular system is defined over strings of $r$ bytes where each byte is a string of $d$ bits in $\zz_p$ for small primes such as $2,3$. This special case of the problem of solving the modular system with constraint is stated as follows.

\begin{problem}\label{Pr:modqr}
Given $A$ in $\zz/(q^r)^{k\times l}$, $b$ in $\zz/(q^r)^k$, $w$ in $\zz/(q^r)^l$ and a linear function $\phi(x):\zz/(q^r)^l\rightarrow \zz_p$ defined by $\phi(x)=<w,x>\rem p$ determine necessary and sufficient conditions for existence of solutions $x$ in $\zz/(q^r)^l$ to the linear system
\[
Ax=b\rem (q^r)
\]
such that $\phi(x)\rem p\neq 0$. Describe construction of a solution when the conditions hold.
\end{problem}

From Theorem \ref{Th:Solnwithphi} we get the condition for solvability and the charactreization of solutions of this special case from the following theorem.

\begin{theorem}
    There exist unimodular matrices $P$, $Q$ as in Lemma \ref{Lem:eqSmithform} such that $P[A,-q^rI]Q=[S,0]$ is in Smith form where $S$ is the diagonal matrix of invariant factors. Denote $\tilde{b}=Pb$. Then the Problem \ref{Pr:ModLSwithPhi} for $n=q^r$ and $q=p^d$ has a solution iff
    \begin{enumerate}
        \item $x_0=S^{-1}\tilde{b}$ is integral.
        \item For an arbitrary $l$ tuple $x_1$ in $\zz/(q^r)$ and $\tilde{x}$ as defined in Lemma (\ref{Lem:eqSmithform}) all solutions $x$ of (\ref{Eq:eqIntegralsystem}) with $n=(q^r)$ are given by (\ref{SolnofMLSwithphi}).
        \item From the definition of $w_0$, $w_1$ in (\ref{w0w1def}), if $<w_0,x_0>\rem p=0$, $w_1\rem p\neq 0$.
        \item To construct a solution $x_1$, the component defining the solutions is chosen as $x_1=0$ when $<w_0,x_0>\rem p\neq 0$ and $<w_1,x_1>\rem\neq 0$ when $<w_0,x_0>\rem p=0$.
    \end{enumerate}
\end{theorem}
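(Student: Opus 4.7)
The plan is to deduce this statement as a direct specialization of Theorem \ref{Th:Solnwithphi} to the case $n = q^r = p^{dr}$, exploiting the fact that $n$ now has a single prime divisor, namely $p$. Accordingly, the finite set of primes $\{p_1, \ldots, p_r\}$ in the hypothesis of Theorem \ref{Th:Solnwithphi} collapses to the singleton $\{p\}$, and the collection of CRT residue conditions becomes a single condition modulo $p$.

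First I would invoke Lemma \ref{Lem:eqSmithform} for the augmented matrix $[A, -q^r I]$ to obtain unimodular $P, Q$ such that $P[A, -q^r I]Q = [S, 0]$, giving the decomposition used in the statement. Since the equivalence of the modular system $Ax = b \rem q^r$ with the integral system (\ref{Eq:eqIntegralsystem}) (with $n$ replaced by $q^r$) is exactly as before, the existence condition $x_0 = S^{-1}\tilde{b}$ integral, and the parametrization of all solutions by (\ref{SolnofMLSwithphi}), follow immediately from Lemma \ref{Lem:eqSmithform}. This takes care of conditions (1) and (2) and the form of all solutions.

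Next I would translate the constraint $\phi(x) \rem p \neq 0$ into the framework of Theorem \ref{Th:Solnwithphi}. The key observation is that an integer is coprime to $q^r = p^{dr}$ if and only if it is nonzero modulo $p$, so requiring $\phi(x) \rem p \neq 0$ is the same as requiring $\langle w, x\rangle$ to be coprime to $q^r$. With the decomposition $\langle w, x\rangle = \langle w_0, x_0\rangle + \langle w_1, x_1\rangle$ coming from (\ref{w0w1def}) applied to $Q$, Theorem \ref{Th:Solnwithphi} yields solvability iff either $\langle w_0, x_0\rangle \rem p \neq 0$ (take $x_1 = 0$) or $\langle w_0, x_0\rangle \rem p = 0$ but $w_1 \rem p \neq 0$, in which case some $x_1$ with $\langle w_1, x_1\rangle \rem p \neq 0$ exists. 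This is exactly condition (3), and the choice prescribed in (4) is the specialization of the CRT construction in Theorem \ref{Th:Solnwithphi} to the case where the CRT is trivial (single prime).

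The only point requiring a small amount of care will be the conversion between the two formulations of the constraint (coprimality to $n$ versus nonvanishing modulo $p$), and the trivialization of the CRT step when there is only one prime divisor; neither presents a genuine obstacle, so the proof is essentially an application of Theorem \ref{Th:Solnwithphi} specialized to a prime-power modulus.
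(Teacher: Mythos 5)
Your proposal is correct and follows essentially the same route as the paper: the paper's own proof is a brief remark that the result follows from Lemma \ref{Lem:eqSmithform} and Theorem \ref{Th:Solnwithphi}, with the CRT construction of $x_1$ becoming unnecessary because $n=q^r=p^{dr}$ has the single prime divisor $p$. Your additional observation that coprimality to $q^r$ is equivalent to nonvanishing modulo $p$ is exactly the translation the paper implicitly relies on, so nothing further is needed.
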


Proof follows from the steps in Lemma (\ref{Lem:eqSmithform}) and Theorem (\ref{Th:Solnwithphi}) and from the fact that $x_1$ exists when $w_1\rem p\neq 0$ such that $<w_1,x_1>\rem p\neq 0$. Since there is only one prime divisor $p$ in this special case of $n=q^r$ and $q=p^d$ the construction of $x_1$ using CRT is not necessary as in the general case. 

\subsection{Decomposition modulo $p^r$ and solution for general modulus $n$}
The modulo prime power case described above leads to the decomposition which helps in solving the general case when $n$ is given with prime factorization
\[
n=\prod_{p|n} p^{r_p}
\]
we can state the following theorem for existence and construction of a solution of Problem \ref{Pr:ModLSwithPhi} using the CRT.

\begin{theorem}\label{Th:CRTversion}
The Problem \ref{Pr:ModLSwithPhi} has a solution iff for each prime divisor $p$ of $n$ the modular system of Problem \ref{Pr:modqr} has a solution $x(p)$ for each $p$ and $q=p$, $r=r(p)$ such that $x(p)$ satisfies $<w,x(p)>\rem p\neq 0$. If $x(p)$ is a solution of the Problem \ref{Pr:modqr} for each of the prime divisors $p$ of $n$ satisfying the constraint $<w,x(p)>\rem p\neq 0$ and
\[
x=x(p)\rem p^{r(p)}\forall p|n
\]
is the solution of the residues with respect to mutually coprime factors $p^{r(p)}$, then $x$ is a solution of the Problem \ref{Pr:ModLSwithPhi}.    
\end{theorem}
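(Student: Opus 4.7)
The proof will be a routine CRT argument in two directions, leaning on the fact that coprimality with $n$ is equivalent to nonvanishing modulo every prime divisor of $n$. First I would set up notation: let $n=\prod_{p\mid n} p^{r(p)}$ and, for each prime divisor $p$, let $x(p)\in\zz/(p^{r(p)})^l$ denote a solution of Problem \ref{Pr:modqr} with parameters $q=p$, $r=r(p)$.

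For the forward direction, assume $x\in\zz_n^l$ solves Problem \ref{Pr:ModLSwithPhi}, so $Ax=b\rem n$ and $\langle w,x\rangle$ is coprime to $n$. Set $x(p):=x\rem p^{r(p)}$. Reducing $Ax=b\rem n$ modulo $p^{r(p)}$ gives $Ax(p)=b\rem p^{r(p)}$. Since $\langle w,x\rangle$ is coprime to $n$, it is in particular coprime to $p$, and therefore $\langle w,x(p)\rangle\rem p=\langle w,x\rangle\rem p\neq 0$. Hence $x(p)$ solves Problem \ref{Pr:modqr} with the required constraint for every prime divisor $p$ of $n$.

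For the converse, suppose that for each prime divisor $p$ of $n$ we are given $x(p)$ as above. Since the $p^{r(p)}$ are pairwise coprime with product $n$, CRT provides a unique $x\in\zz_n^l$ componentwise satisfying $x\equiv x(p)\rem p^{r(p)}$ for every $p\mid n$. Reducing modulo each $p^{r(p)}$, we get $Ax\equiv Ax(p)\equiv b\rem p^{r(p)}$, so by CRT applied to the equation $Ax=b$ we conclude $Ax=b\rem n$. For the constraint, $\langle w,x\rangle\equiv\langle w,x(p)\rangle\rem p^{r(p)}$ for every $p\mid n$; in particular $\langle w,x\rangle\rem p=\langle w,x(p)\rangle\rem p\neq 0$ for each prime divisor $p$. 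A number is coprime to $n$ iff it is nonzero modulo every prime divisor of $n$, hence $\langle w,x\rangle$ is coprime to $n$, and $x$ solves Problem \ref{Pr:ModLSwithPhi}.

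There is essentially no obstacle here; the only point that needs care is the standard observation that coprimality with $p^{r(p)}$ is the same as nonvanishing modulo $p$, which is what allows Problem \ref{Pr:modqr} (whose constraint is stated modulo $p$) to feed correctly into the global constraint (stated modulo $n$). Existence of the $x(p)$'s themselves is guaranteed by the solvability criterion already established in the preceding theorem for the prime-power case, so no additional Smith-form computation is needed in this proof.
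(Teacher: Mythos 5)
Your proposal is correct and follows essentially the same route as the paper's proof: the CRT bijection between solutions modulo $n$ and tuples of solutions modulo the coprime factors $p^{r(p)}$, combined with the observation that $\la w,x\ra$ is coprime to $n$ iff it is nonzero modulo every prime divisor $p$ of $n$. Your write-up is in fact somewhat more explicit than the paper's about the two directions and about why the constraint modulo $p$ in Problem \ref{Pr:modqr} feeds correctly into the global coprimality condition.
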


\begin{proof}
    If the modular system $Ax=b\rem n$ has a solution $x$ then $x$ can be obtained uniquely from the residues $x(p)=x\rem p^{r(p)}$ which are solutions of $Ax=b\rem p^{r(p)}$ (termed as equations of residues modulo $p^{r(p)}$) and conversely if the residue equations have solutions $x(p)$ then the vector $x$ obtained by CRT with respect to coprime moduli $p^{r(p)}$ satisfies the modular system. Further the solution $x$ satisfies $<w,x>$ coprime to $n$ iff $<w,x>\rem p\neq 0$ for all prime divisors $p$. Hence every solution $x(p)$ of the residue equation must satisfy $<w,x>\rem p=<w\rem p,x(p)>\rem p\neq 0$. Conversely any solutions $x(p)$ of residue systems satisfying the constraint $<w,x(p)>\rem p\neq 0$ determine a unique solution $x$ of Problem \ref{Pr:modqr} modulo $n$. All other solutions can be obtained from such residues with respect to all prime divisors $p$.
\end{proof}

This theorem shows the decomposition of the problem from modulus $n$ to the smaller moduli $p^{r(p)}$ for prime divisors $p$ of $n$. Such a decomposition is computationally advantageous as we show next because the Smith form computation of $[A,-p^{r(p)}I]$ does not involve computation of GCDs of elements of $A$ along with $n$ but only involves division of $p^r$ to find the largest divisor. Hence solving the decomposed linear systems modulo $p^{r(p)}$ creates a notion of \emph{relative invariant factors} which are invariant factors of $[A,-p^{r(p)}I]$. These can be computed much efficiently because the prime $p$ is known (hence also all the divisors of $p^r$) and hence analogous computation of the GCD of rows and columns of $A$ required in Smith form computation of $A$ are not required. 

\subsection{Smith form reduction of $[A,-p^rI]$}
The above theorem gives solution of the special case of the modular system with constraint defined in Problem \ref{Pr:modqr} for $q=p$. Hence it is desirable to simplify the algorithm for computation of the invariant factors of $[A,-p^rI]$ since all of these are powers of the prime $p$. 

\subsubsection{Solution of the Bezout identity with a prime power dividend}
The Smith form reduction of a matrix such as $[A,-p^rI]$ with $p$ a prime by unimodular transformations, is based on computation of integral solutions $x_i,y$ to the Bezout identities such as
\beq\label{Bezoutq}
\sum_{i=1}^la_ix_i+p^ry=q
\eeq
where $a_i$ are given integers modulo $p^r$ and $q$ is the GCD of $(a_1,\ldots,a_l,p^r)$. We may call such a Bezout identity to be \emph{Bezout identity modulo $p^r$}. Since the only divisors of $p^r$ are $p^k$ for $0\leq k\leq r$, $q=p^g$ and the normalized Bezout identity (after canceling the factor $p^g$ from $a_i$ and $p^r$) is
\beq\label{Bezout}
\sum_{i=1}^la_ix_i+p^dy=1
\eeq
$d=r-g$, where we now consider $a_i$, $x_i$, $y$ to be in the modular ring $\zz/(p^d)$. Consider a solution in the simplest case
\beq\label{Bezout1}
ax+p^dy=1
\eeq
where $a$ and $p^d$ are coprime and
\beq\label{padicexpansion}
\begin{array}{lcl}
a & = & a_0+a_1p+\ldots+a_{d-1}p^{(d-1)}\\
b & = & b_0+b_1p+\ldots+b_{d-1}p^{(d-1)}\\
x & = & x_0+x_1p+\ldots+x_{d-1}p^{(d-1)}\\
y & = & y_0+y_1p+\ldots+y_{(d-1)}p^{(d-1)}
\end{array}
\eeq
are the $p$-adic representations of $a$, $x$ and $y$ in $\zz/(p^d)$ with $a_i,x_i,y_i$ in $[0,p-1]$. For any number $z$ in $\zz/(p^d)$ denote $(z)=z\rem p$ and $[z]=z\quo p$.
when $(a,p^d)$ are coprime. When $(a,p^d)$ are coprime $a$ has no divisor $p^k$ for $0\leq k\leq d$.

\begin{lemma}[Bezout identity mod $p^r$ for a single element]\label{Lem:Bezout1}
Given the single number 
\[
a=\sum_{i=0}^{d-1}a_ip^i
\]
for $i=0,\ldots,(d-1)$ with $a_0\rem p\neq 0$ is coprime to $p^d$. If $x,y$ satisfy the Bezout identity (\ref{Bezout1})
\[
axb+p^dyb=b
\]
then $xb$, $yb$ give solutions of the Bezout identity for right hand side $b$. The unique solutions $x_i$, $y_i$ of co-efficients of solutions $x,y$ of (\ref{Bezout1}) modulo $p^d$ 
are given by the following iterations 
\beq\label{IterationsforBezoutidentity}
\begin{array}{lclcl}
        t_0 & := & (a_0x_0)\\
        t_0 & = & 1\rem p\\ 
        \mbox{for $k=1,2,\dots,(d-1)$} & &\\
        t_k & := & [t_{(k-1)}]+(a_0x_k+a_1x_{(k-1)}+\ldots+a_kx_0)\\ 
        t_k & = & 0\rem p\\
        \mbox{for $j=0,1,\ldots,(d-1)$} & &\\
        t_{(d+j)} & := & [t_{(d-1+j)}]+a_1x_{(d-1+j)}+a_2x_{(d-2+j)}+\ldots+a_{(d-1)}x_{(1+j)}\\
        t_{(d+j)}+y_j & = & 0\rem p
\end{array}  
\eeq
Given the solutions $x,y$ in $\zz/(p^d)$ an integral solution to the Bezout identity (\ref{Bezout1}) is
\beq\label{IntBezout1}
ax+p^d(y-t)=1
\eeq
where $t$ exists because $ax+p^dy=1\rem p^d$
\end{lemma}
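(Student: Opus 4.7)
The plan is to verify by direct computation that the $p$-adic digit recursion in (\ref{IterationsforBezoutidentity}) correctly enforces the $p^k$-place equations coming from expanding $ax$ and equating with $1-p^d y$. The first assertion (that $xb$, $yb$ solve the Bezout identity when the right hand side is $b$) is immediate from multiplying (\ref{Bezout1}) by $b$, so the real substance lies in establishing the recursion and then deducing the integer identity (\ref{IntBezout1}).

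First I would write out the ordinary integer convolution $ax=\sum_{k=0}^{2d-2}c_k p^k$ with $c_k=\sum_{i+j=k}a_ix_j$ and reduce it to $p$-adic normal form by propagating carries, defining $t_k$ so that $(t_k)$ is the $k$-th $p$-adic digit of $ax$ and $[t_k]$ is the carry into position $k+1$. For the low positions $k=0,\ldots,d-1$ the constant $1$ has digit $1$ at $k=0$ and $0$ elsewhere, so matching the $p^k$ coefficients of $ax$ with those of $1$ gives $t_0\equiv 1\rem p$ and $t_k\equiv 0\rem p$, which is exactly the first block of (\ref{IterationsforBezoutidentity}). At each step the unknown digit $x_k$ enters linearly with coefficient $a_0$, which is invertible modulo $p$ by hypothesis; hence $x_k$ is uniquely determined in $[0,p-1]$ from the already computed $x_0,\ldots,x_{k-1}$ and the prior carry $[t_{k-1}]$. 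Inductively this produces a unique $x\in\zz/(p^d)$ with $ax\equiv 1\rem p^d$.

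Next I would compute the digits of $ax$ at positions $d,d+1,\ldots,2d-2$. At position $d+j$ only convolution terms $a_ix_{d+j-i}$ with $i\geq 1$ contribute (since $d+j-i\le d-1$ requires $i\ge j+1$), supplemented by the incoming carry $[t_{d-1+j}]$; this is precisely the second block of (\ref{IterationsforBezoutidentity}). The requirement $ax+p^dy=1$ forces the $p^{d+j}$-digit of $ax$ to cancel against $y_j$, which gives $t_{d+j}+y_j\equiv 0\rem p$ as stated, and again uniquely determines $y_j$ in $[0,p-1]$.

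Finally, for the integer identity (\ref{IntBezout1}), the $x,y$ produced above are representatives in $[0,p^d-1]$ satisfying $ax+p^dy\equiv 1\rem p^d$ by construction, so $ax+p^dy=1+p^dt$ for a unique integer $t$; subtracting this $t$ from $y$ yields the exact integer equality. The main obstacle in this proof is purely notational, namely keeping the carry bookkeeping aligned with the convolution indices across the boundary at $k=d$; once this alignment is set up, invertibility of $a_0$ modulo $p$ makes every digit equation a uniquely solvable linear equation over $\zz/(p)$, so existence and uniqueness of the digits $x_i,y_j$ follow step by step without any further arithmetic difficulty.
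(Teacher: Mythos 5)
Your proposal is correct and follows essentially the same route as the paper's proof: substitute the $p$-adic expansions into $ax+p^dy=1$, compare coefficients of powers of $p$ while propagating carries via $[t_k]$, use invertibility of $a_0$ modulo $p$ to solve each digit equation uniquely, and absorb the residual multiple of $p^d$ into the correction term $t$ to get the exact integer identity (\ref{IntBezout1}). Your write-up is somewhat more explicit about the convolution indices and the carry bookkeeping across the boundary $k=d$ than the paper's sketch, but there is no substantive difference in method.
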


\begin{proof}
Proof follows by substitution of $p$-adic representations of $a$, $x$, $y$ in (\ref{Bezout1}) and comparing co-efficients of powers of $p$. While each co-efficient is computed modulo $p$, the carry part which is quotient by $p$ is added to the higher power of $p$ shown as $[t_{(k-1)}]$ in the next value of $t_k$ in the iteration. Note that each equation $t_k=0$ for $k>0$ has the unknown $x_k$, is of the form $a_0x_k=b_k\rem p$ where $b_k$ depends on previously computed $x_0,\ldots,x_{(k-1)}$ and only inversion modulo $p$ of $a_0$ is required for solving all $x_k$. Since $x,y$ were assumed with representations in $\zz/(p^d)$ the solution $x,y$ of the iterations (\ref{IterationsforBezoutidentity}) satisfy
\[
ax+p^dy=1\rem p^d
\]
Hence there exists an integer $t$ such that the right hand side above is $1+p^dt$. Hence a solution of the Bezout identity (\ref{Bezout1}) is given by (\ref{IntBezout1}) as claimed.
\end{proof}

\begin{remark}
    Note that to find solutions $x,y$ for (\ref{Bezout1}), it is enough to solve the equation 
    \[
    ax=1\rem p^d
    \]
    for $x$ modulo $p^d$. Once $x$ is solved, $ax=1+py$ for some $y$ which can be obtained by dividing $ax-1$ by $p^d$. Since $a$ is assumed to be in $\zz/(p^d)$ for any other integer $a$ the Bezout identity solutions $x,y$ can be found by solving $(a\rem p^d)x=1$ modulo $p^d$.
\end{remark}

If solution $x,y$ in $p^d$ of the Bezout identity (\ref{Bezout1}) are known then the matrix 
\[
Q=
\barr{lr}x & -p^d\\y & a\earr
\]
is unimodular and
\[
[a,p^d]Q=[1,0]
\]
Also $[ab,p^db]Q=[b,0]$. We can thus formally state
\begin{lemma}\label{Lem:Bezoutpr}
    If $\mbox{GCD}(a,p^r)=b$ then there exists a unimodular matrix $Q$ such that $[a,p^r]Q=b$.
\end{lemma}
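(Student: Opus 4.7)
The plan is to reduce the statement to the coprime Bezout case already handled by Lemma \ref{Lem:Bezout1}. Since $b$ must divide $p^r$, it is necessarily of the form $b = p^g$ for some $0 \le g \le r$. Writing $a = p^g \hat{a}$, the hypothesis $\gcd(a, p^r) = p^g$ forces $\hat{a}$ to be coprime to $p^{r-g}$, so $\hat{a}$ (reduced modulo $p^{r-g}$) lies in the hypotheses of Lemma \ref{Lem:Bezout1}.

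Next I would invoke Lemma \ref{Lem:Bezout1} on the pair $(\hat{a}, p^{r-g})$ to produce integers $x, y$ satisfying the integral identity
\[
\hat{a}x + p^{r-g}y = 1.
\]
Form the $2 \times 2$ integral matrix
\[
\hat{Q} = \barr{lr} x & -p^{r-g} \\ y & \hat{a} \earr,
\]
which is unimodular since $\det \hat{Q} = \hat{a}x + p^{r-g}y = 1$. A direct multiplication gives $[\hat{a}, p^{r-g}]\hat{Q} = [1, 0]$.

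Finally I would multiply both entries of this identity by $p^g$, using the factorizations $a = p^g \hat{a}$ and $p^r = p^g \cdot p^{r-g}$, to obtain
\[
[a, p^r]\hat{Q} = p^g [1, 0] = [b, 0],
\]
so setting $Q = \hat{Q}$ proves the claim. The only substantive step is the application of Lemma \ref{Lem:Bezout1}; the rest is bookkeeping around factoring out the common power of $p$. I do not anticipate a serious obstacle, since unimodularity of $\hat{Q}$ drops out of the Bezout relation and the scalar multiplication by $p^g$ commutes with the matrix action on the row.
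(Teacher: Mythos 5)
Your proposal is correct and matches the paper's own argument: the paper likewise factors out the common power $p^g$, solves the Bezout identity for the coprime pair $(ap^{-g},p^{r-g})$ via Lemma \ref{Lem:Bezout1}, forms the same $2\times 2$ unimodular matrix, and scales the row identity by $p^g$ (your $[b,0]$ is in fact the correct right-hand side; the lemma statement's ``$=b$'' is a typo). No substantive difference.
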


\begin{remark}
    Note that the unimodular matrix $Q$ is obtained using $x,y$ the solutions of the Bezout identity (\ref{Bezout1}) for the pair $(ap^{-g},p^d)$ for $d=r-g$ which is coprime. This computation is performed as shown in iterations (\ref{IterationsforBezoutidentity}) in Lemma \ref{Lem:Bezout1} which involves division operations by $p$ to compute the co-efficients mod $p$ and the carry as quotient by $p$. The operations required to compute $x$, $y$ are 1) one mod $p$ inversion, 2) maximum of $d^2$ multiplications modulo $p$ and 3) $d$ integral divisions by $p$. The overall cost of computations of iterations (\ref{IterationsforBezoutidentity}) can be shown to be $O(l(p)d^2)$. In general the cost of computation of the extended GCD of two integers $a,b$ is known to be approximately $O(l(a)l(b))$ where $l(a)$ is the bit length of $a$ \cite{Buchmann}. If $a,b$ have prime factorizations 
    \[
    a\prod_{p_i|a}p_i^{r_i}\mbox{ and }b=\prod_{p_j|b}p_j^{r_j}
    \]
    then
    \[
    l(a)=\sum_i r_il(p_i)\mbox{ and }l(b)=\sum_jr_jl(p_j)
    \]
    Hence the cost of sequential computation of the solution of the Bezout identity is considerably reduced by considering independent representations of $a,b$ modulo powers of prime divisors of $n$. 
    For general integers solving the Bezout identity by parallel computation using CRT by reduction over every divisor $p^{r(p)}$ of the modulus $n$ is the most transparent way to reduce the cost of computation. 
\end{remark}

\begin{remark}
    Another important observation about the computation of the Bezout identity and $Q$ using the iterations (\ref{IterationsforBezoutidentity}) is that the successive Euclidean divisions of $a$ by $p^d$ are not used. The only equations to be solved at all iteration steps are of the form $a_0x=b\rem p$. On the other hand Euclidean division method of computing the Bezout identity involves several division by dividends other than $p^d$. Hence the direct computation of the Bezout identity using the $p$-adic iterations (\ref{IterationsforBezoutidentity}) is far more inexpensive.
\end{remark}

\subsubsection{Solution of Bezout identity mod $p^r$ with multiple elements}
Next we state a lemma to extend the Lemma \ref{Lem:Bezoutpr} to multiple elements

\begin{lemma}[Bezout identity for Multiple elements with GCD $p^g$]
Let $GCD(a_1,a_2,\ldots,a_N,p^r)=p^g$. Then there exists a unimodular matrix $Q$ such that $[a_1,a_2,\ldots,a_n,p^r]Q=[p^g,0,\ldots,0]$. The first column of $Q$ is $[x_1,x_2,\ldots,x_N,y]^T$ where $x_i$ and $y$ are modulo $p^r$ and satsfy
\[
\sum_{i=1}^Na_ix_i+p^ry=p^g
\]
\end{lemma}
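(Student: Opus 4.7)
The plan is to prove this by induction on $N$, using the two-term Bezout lemma at each step. The base case $N=1$ is precisely Lemma \ref{Lem:Bezoutpr}, which yields a $2 \times 2$ unimodular $Q$ whose first column $(x_1, y)^T$ realizes $a_1 x_1 + p^r y = p^g$ for $p^g = \gcd(a_1, p^r)$.

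For the inductive step I would first invoke the induction hypothesis on the shorter list $(a_1, \ldots, a_{N-1}, p^r)$ to obtain $p^{g'} = \gcd(a_1, \ldots, a_{N-1}, p^r)$ together with an $N \times N$ unimodular $Q'$ satisfying $[a_1, \ldots, a_{N-1}, p^r] Q' = [p^{g'}, 0, \ldots, 0]$. I would then lift $Q'$ to an $(N+1) \times (N+1)$ unimodular matrix $\tilde{Q}_1$ that acts as $Q'$ on the coordinates carrying $a_1, \ldots, a_{N-1}, p^r$ (that is, positions $1, \ldots, N-1, N+1$) and as the identity on coordinate $N$, leaving $a_N$ untouched. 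Applying $\tilde{Q}_1$ transforms the row into $[p^{g'}, 0, \ldots, 0, a_N, 0]$, with non-zero entries only at positions $1$ and $N$.

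Next I would collapse the remaining pair $(p^{g'}, a_N)$ using a second $2 \times 2$ unimodular matrix $M$ with $[p^{g'}, a_N] M = [p^g, 0]$, where $p^g = \gcd(p^{g'}, a_N) = \gcd(a_1, \ldots, a_N, p^r)$. Embedding $M$ into an $(N+1) \times (N+1)$ unimodular matrix $\tilde{Q}_2$ acting on positions $1$ and $N$ and forming $Q = \tilde{Q}_1 \tilde{Q}_2$ yields $[a_1, \ldots, a_N, p^r] Q = [p^g, 0, \ldots, 0]$. Expanding the product of this row against the first column of $Q$ then gives exactly $\sum_{i=1}^N a_i x_i + p^r y = p^g$, and reducing the entries modulo $p^r$ gives representatives in $\zz/(p^r)$ as claimed.

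The main obstacle I foresee is the justification of the second $2 \times 2$ reduction: Lemma \ref{Lem:Bezoutpr} is stated for pairs of the form $(a, p^r)$ with $p^r$ the original modulus, whereas here I need to invoke it for $(p^{g'}, a_N)$ where the prime-power argument is $p^{g'} \mid p^r$. The fix is that the $p$-adic iterations of Lemma \ref{Lem:Bezout1} that underpin Lemma \ref{Lem:Bezoutpr} only require one of the two arguments to be a pure power of $p$; factoring $a_N = p^h m$ with $\gcd(m, p) = 1$ then reduces the matter either to the trivial case $h \geq g'$ (where a single column operation suffices) or to the normalized coprime Bezout identity for $(m, p^{g'-h})$, to which the same iterations apply. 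Beyond this, the only remaining care point is to compose the two block embeddings $\tilde{Q}_1$ and $\tilde{Q}_2$ consistently on the correct index sets, which is routine bookkeeping.
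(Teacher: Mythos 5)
Your proposal is correct and follows essentially the same route as the paper: induction on $N$, invoking the induction hypothesis on the shorter list to produce $p^{g'}=\gcd(a_1,\ldots,a_{N-1},p^r)$, then collapsing the remaining pair $(p^{g'},a_N)$ with a $2\times 2$ unimodular matrix from the two-element Bezout lemma and composing the two block embeddings. Your extra care about applying the two-element lemma to a pair of the form $(p^{g'},a_N)$ rather than $(a,p^r)$ is a point the paper glosses over, but it does not change the argument.
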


\begin{proof}
Let $Q_1$ be a $2\time 2$ unimodular matrix as computed in Lemma \ref{Lem:Bezout1} such that $[a_N,p^r]=[p^{t_1},0]$ where $p^{t_1}$ is the GCD of $(a_N,p^r)$. By induction step assume that the GCD of $(a_{(N-1)},\ldots,a_1,p^r)$ be $p^{t_{(N-1)}}$ and let $Q_{(N-1)}$ be a $n\times n$ unimodular matrix such that
\[
[a_{(N-1)},\ldots,a_1,p^r]Q=[p^{(t_{(N-1)}},0,\ldots,0]
\]
The GCD of $(a_N,p^{t_{(N-1)}})$ is then $p^g$. Let $Q'$ be a unimodular matrix as derived in lemma 4 such that $[a_N,p^{t_{(N-1)}}Q'=[p^g,0]$. Define 
\[
Q_N=
\barr{ll}Q' & 0\\0 & I_{(n-1)}\earr
\barr{ll}1 & 0\\0 & Q_{(n-1)}\earr
\]
Then $Q_N$ is $(N+1)\times (N+1)$ unimodular matrix such that
\[
[a_N,\ldots,a_1,p^r]Q=[p^{t_{g}},0,\ldots,0]
\]
The numbers $x_i$ and $y$ from the first column of $Q$ then satisfy the Bezout identity as claimed.
\end{proof}

The construction of unimodular matrix for solving the Bezout identity modulo $p^r$ for multiple elements $(a_1,\ldots,a_N)$ and GCD $p^g$ with $p^r$ uses the computations of Lemma \ref{Lem:Bezout1} which are mod $p$ and quotients by $p$. Hence for a known prime $p$ these computations are much reduced compared to GCD computation of general $(N+1)$ elements and further solution of the general Bezout identity. In the next proposition we derive the unmodular matrix to transform the matrix $[A,-p^rI]$ to its Smith form.

\begin{prop}
    There exist unimodular matrices $P$, $Q$ such that 
    \[
    P[A,-p^rI]Q=[S_p,0]
    \]
    where $S_p$ is a diagonal matrix
    \[
    S_p=\mbox{diag}\,\{p^{r_1},p^{r_2},\ldots,p^{r_k}\}
    \]
    with $0\leq r_1\leq r_2\leq\ldots\leq r_k$.
\end{prop}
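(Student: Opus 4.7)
The plan is to combine the general existence of the Smith canonical form (Lemma 1) with a determinantal-divisor argument that exploits the explicit $-p^rI$ block to force every invariant factor to be a nonnegative power of $p$.

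First I would note that the integral rank of $[A,-p^rI]$ equals $k$, since the trailing $k\times k$ block $-p^rI$ already has nonzero determinant $(-p^r)^k$. Applying Lemma 1 therefore yields unimodular matrices $P\in\zz^{k\times k}$ and $Q\in\zz^{(k+l)\times(k+l)}$ with $P[A,-p^rI]Q=[S,0]$, where $S$ is a $k\times k$ diagonal matrix whose entries $f_1,\ldots,f_k$ are the invariant factors and satisfy the divisibility chain $f_1|f_2|\cdots|f_k$.

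The main step is then to show that each $f_i$ is a nonnegative power of $p$. I would invoke the classical identity $d_i=f_1f_2\cdots f_i$, where $d_i$ is the $i$-th determinantal divisor, i.e.\ the GCD of all $i\times i$ minors of $[A,-p^rI]$. For any $i\in\{1,\ldots,k\}$, the submatrix formed by rows $1,\ldots,i$ and columns $l+1,l+2,\ldots,l+i$ is, by the diagonal shape of $-p^rI$, a diagonal matrix with $-p^r$ on each of its $i$ diagonal entries, whose determinant is $(-1)^ip^{ri}$. Hence $d_i$ divides $p^{ri}$, which forces $d_i$ itself to be a nonnegative power of $p$. Writing $d_i=p^{s_i}$ with $s_0=0$, the divisibility $d_{i-1}\,|\,d_i$ gives $s_{i-1}\leq s_i$; thus $f_i=p^{r_i}$ with $r_i:=s_i-s_{i-1}\geq 0$, and the chain $f_1|f_2|\cdots|f_k$ translates into $0\leq r_1\leq r_2\leq\cdots\leq r_k$. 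Setting $S_p=\mbox{diag}\{p^{r_1},\ldots,p^{r_k}\}$ yields $S=S_p$ and the claimed identity $P[A,-p^rI]Q=[S_p,0]$.

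The step that might at first appear as the main obstacle, namely the explicit construction of $P$ and $Q$, is in fact already prepared by the preceding lemmas on the Bezout identity modulo $p^r$ (Lemma \ref{Lem:Bezout1}, Lemma \ref{Lem:Bezoutpr} and the multi-element extension): running the standard Smith form algorithm while invoking those lemmas at each row/column clearing step would produce the same $S_p$ using only modulo $p$ arithmetic and quotients by $p$. Only the a posteriori verification that every resulting diagonal entry is a power of $p$ requires the determinantal argument given above, which is the crux of the proof.
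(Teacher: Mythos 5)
Your proof is correct, but it takes a genuinely different route from the paper's. The paper proves the proposition constructively: it runs the Smith reduction directly on $[A,-p^rI]$, first permuting columns to expose the $p^rI$ block, then observing at each row/column clearing step that the relevant GCD must be a power $p^t$ (because $p^r$ sits among the entries being combined), and iterating until a diagonal of prime powers emerges; the divisibility chain is then enforced by a final pass of column additions. Your argument instead invokes the abstract existence of the Smith form (Lemma 1, noting correctly that the integral rank is $k$ because $\det(-p^rI)=(-p^r)^k\neq 0$) and pins down the invariant factors via the determinantal-divisor identity $d_i=f_1\cdots f_i$: since the $i\times i$ minor drawn from the $-p^rI$ block equals $(-1)^ip^{ri}$, each $d_i$ divides $p^{ri}$ and is hence a power of $p$, so each $f_i=d_i/d_{i-1}$ is a nonnegative power of $p$ and the chain $f_1\,|\,f_2\,|\,\cdots\,|\,f_k$ yields $r_1\leq\cdots\leq r_k$. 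Your argument is shorter and more airtight as a proof of the bare existence statement --- in particular it avoids the paper's somewhat loose step asserting that the GCD of the reduced block $A_1$ acquires a divisor $p^{k_2}$ with $k_2\geq k_1$. What it gives up is exactly what the paper's proof is designed to deliver: an explicit algorithm for $P$ and $Q$ using only mod-$p$ arithmetic and quotients by $p$, which is the computational point of the whole section. You acknowledge this in your closing paragraph by deferring the construction to Lemmas \ref{Lem:Bezout1} and \ref{Lem:Bezoutpr} and their multi-element extension, which is a fair division of labor; a referee would likely accept your version as the cleaner existence proof, with the algorithmic content carried by those lemmas.
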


\begin{proof}
    First bring the matrix $\hat{A}=[A,-p^rI]$ by column (unimodular) transformations $\hat{A}V_1$ to the upper triangular form
    $[p^rI,A]$. Let $t\leq r$ be the largest power such that $p^t$ divides all elements in the first row of $A$. Then $p^t$ is the GCD of elements in the first row of $\hat{A}V_1$. Hence there exists a unimodular matrix $V_2$ obtained from the extended Euclidean divison representation of the first row such that $\hat{A}_2=\hat{A}V_1V_2$ has $p^t$ as $(1,1)$ element while the entire first row is zero. Repeat the same process to make the first column of $\hat{A}_2$ equal to zero by constructing a unimodular matrix $P$ and computing $P\hat{A}_2$. Repeat both steps of operation by left and right unimodular matrices till $\hat{A}$ is brought to the form
    \[
    \barr{ll}p^{k_1} & 0\\0 & A_1\earr
    \]
    Since the original matrix $\hat{A}$ has some of its $2\times 2$ minors of the form $p^{2r}$ the GCD of elements of $A_1$ must have a divisor $p^{k_2}$ where $k_2\geq k_1$. Now repeat the same steps to make $A_1$ successively into diagonal form by unimodular row and column operations. Hence after repeating such steps the matrix $[A,-p^r]$ is transformed into a diagonal form with diagonal entries $p^{k_i}$. Next add all columns from 2nd to last to the first column and then reduce the matrix to $p^{r_1}$ as $(1,1)$ entry and and another matrix $A_1$ as above. Now we can see that $0\leq r_1$ and $p^{r_1}$ divides all other powers $p^{k_i}$ for $i\geq 2$. Repeating the same process now on the resultant matrix gives the Smith form of $[A,-p^rI]$ as claimed.
\end{proof}

\begin{remark}
    The reduction to Smith form of $[A,-p^rI]$ shown in the above Proposition uses division by $p$ in all the steps to determine largest powers of $p$ as the gcd of entries of the matrices at each step. If any element in the matrix is not divisible by $p$ then the corresponding invariant factor is $1$. Using the well known result on high probability of coprimeness of random integers it follows that the invariant factors for lower indices are highly likely to be $1$.
\end{remark}

\begin{remark}
Computing the GCD and solving the Bezout identity of a set of $N$ integers and $p^r$ modulo $p^r$ has complexity equivalent to the $O(Nd^2(\log p)^2)$ where $d=r-t$ and $p^t$ is the GCD of $(a_1,\ldots,a_N,p^r)$. For a given prime $p$ the unimodular matrix $Q$ which can transform $[a_1,a_2,\ldots,a_N,p^r]$ as
\[
\left[
a_1,\ldots,a_N,p^r
\right]Q=
\left[
p^t,0,\ldots,0
\right]
\]
can be obtained in steps involving $O(Nd^2(\log p)^2)$ parallel operations. This involves only a verification of division by $p$ and modulo $p$ operations to solve equations (\ref{IterationsforBezoutidentity}). On the other hand if $p^r$ was one more general integer instead, the complexity of computing the GCD of $N+1$ general integers of length $l$ is $O((l^2(N+1))$ which increases rapidly with $l$, as observed from the lengths of general integers modulo $n$ in Remark 1.
\end{remark}

\subsection{High level algorithm to solve the modular system}
The above discussion clearly shows that the solution of the modular system (\ref{ModLS}) can be solved using the CRT by decomposition and solution modulo divisors $p^{r(p)}$ in the prime factorization of $n$. However for $r(p)>1$ such decomposition does not result into linear systems over finite fields. Hence the decomposed systems need to be solved again using the Smith form for the augmented matrix of the system $Ax=b\rem p^{r(p)}$. This step can then be carried out by the procedure described above by solving for the unimodular matrices modulo $p^{r(p)}$. Algorithm \ref{Alg:Solnmodulopr} describes the computation of solutions modulo $p^{r(p)}$.

\begin{algorithm}
\caption{Algorithm to compute the solution of the linear system modulo $p^r$}
\label{Alg:Solnmodulopr}
\begin{algorithmic}[1]
\Procedure{LSmodpr}{Solve $Ax=b\rem p^r$}
\State \textbf{Input} $A,b,p,r$.
\State\textbf{Output} Integeral Solutions $x,y$ of the system $Ax-p^ry=b$ and solution modulo $p^r$.
\State Form the augmented matrix $[A,-p^rI]$.
\State Find unimodular integral matrices $P$, $Q$ from the procedure described in Lemma 2, to determine the Smith form $[S_p,0]$ of the augmented matrix.
\State Compute 
\[
x_0=(S_p)^{-1}Pb
\]
\If{$x_0$ is not an integral} Go to \textbf{end if}
\Else
\State Return $x=Q_0x_0,y=Q_2x_0$ as one solution. See (\ref{allsolns}).
\State Choose an arbitrary integral $x_1$ in $\zz^l$. Compute arbitrary solutions $x,y$ from formula (\ref{allsolns}) dependent on $x_1$.
\State Return solutions $x,y$ and solutions $x\rem p^r$.
\EndIf
\State ``Solution does not exist".
\EndProcedure
\end{algorithmic}
\end{algorithm}

\subsection{Using the CRT to solve the modular system}
The final algorithm to solve the modular system (\ref{ModLS}) using CRT is now presented below as Algorithm \ref{Alg:Solnmodulon}.

\begin{algorithm}
\caption{Algorithm to compute the solution of the linear system modulo $n$ using CRT}
\label{Alg:Solnmodulon}
\begin{algorithmic}[1]
\Procedure{LSmodn}{Solve $Ax=b\rem n$}
\State \textbf{Input} $A,b,n$, $m$ distinct prime divisors of $n$ and their multiplicities $p,r(p)$.
\State\textbf{Output} Integeral Solutions $x,y$ of the system $Ax-ny=b$.
\State Form the augmented matrix $[A,-nI]$.
\For{For each prime divisor $p_i|n$ with multipicity $r_i$}
\State Compute $j=1,2,\ldots,m$
\[
\begin{array}{lcl}
P_j & = & \prod_{i=1,i\neq j}^{i=m}(p_i)^{r_i}\\
Q_j & = & (P_j)^{-1}\rem p_j^{r_j}
\end{array}
\]
\State Find solutions $x(i),y(i)$ of the integral system $Ax(i)-p_i^{r_i}y(i)=b$ using the procedure LS\textsc{modpr}.
\State $x(i)\leftarrow x(i)\rem p_i^{r_i}$
\If{for some $i$ a solution does not exist}
\State Go to \textbf{end if}
\Else
\State Compute $x=(\sum x(i)P_iQ_i)\rem n$
\State Return solution $x$ of (\ref{ModLS}).
\EndIf
\State ``Solution does not exist".
\EndFor
\EndProcedure
\end{algorithmic}
\end{algorithm}

Using the simplification in computation of the invariant factors of $[A,-p^rI]$ and decomposition of the problem of solving the modular system over the residues as described in Theorem \ref{Th:CRTversion}, existence and construction of solutions to Problem \ref{Pr:ModLSwithPhi} can be solved efficiently by parallel computation by using the reduction over moduli $p^{r(p)}$ which are coprime divisors of $n$. Further refinement of the computation is obtained by using efficient arithmetic modulo $q=p^d$ where $d$ is the byte length of representation of numbers in digits modulo $p$. Such a computation is described in Appendix A.
\section{Linear system with constraint over a field}
For completeness of exposition and to conclude this paper, we consider  a special case of Problem 1 over a field. This problem is likely to be only of theoretical interest but the analysis below shows that the problem can be solved purely using Linear Algebra instead of integer or modular arithmetic. Consider a fresh statement of the problem.

\begin{problem}\label{Prob:LSoverK}
    Let $K$ be a field and $A$ in $K^{m\times l}$, $b$ in $K^m$, $w$ in $K^l$ be given matrices. Determine the necessary and sufficient conditions under which the linear system
    \beq\label{LSoverK}
    Ax=b
    \eeq
    has a solution $x$ in $K^l$ such that the linear function $\phi(x)=\la w,x\ra$ satisfies
    \beq\label{phioverK}
    \phi(x)\neq 0
    \eeq
    Develop a computational procedure to determine a solution.
\end{problem}

The problem is resolved by the following 
\begin{prop}\label{Pr:LSoverK}
The linear system (\ref{LSoverK}) over $K$ has a solution $x$ over $K$ such that $\phi(x)\neq 0$ iff
\beq\label{nascK}
\begin{array}{ll}
\mbox{1} & \rank A=\rank [A|b]\\
\mbox{2} & \dim\mbox{\rm rowspan } A<\dim\mbox{\rm rowspan } 
\barr{l}A\\w^T\earr
\end{array}
\eeq
\end{prop}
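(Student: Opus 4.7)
The plan is to translate both conditions into linear-algebra statements about the affine set $x_0+\ker A$ and the restriction of $\phi$ to it. Condition~1 is the classical Rouch\'e--Capelli criterion for the existence of a particular solution $x_0$, after which every solution of $Ax=b$ takes the form $x_0+v$ with $v\in\ker A$. Condition~2 is, by the rank lemma applied to the matrix $\begin{bmatrix}A\\w^{T}\end{bmatrix}$, equivalent to the statement $w\notin\mbox{rowspan}\,A$. The algebraic engine behind the proof is the finite-dimensional duality
\[
\ker A=(\mbox{rowspan}\,A)^{\perp},\qquad(\ker A)^{\perp}=\mbox{rowspan}\,A,
\]
under the standard bilinear pairing $\la u,v\ra=\sum_{i}u_iv_i$ on $K^{l}$, which is nondegenerate and hence self-orthogonal in finite dimensions over any field.

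For the sufficiency direction, assume both conditions and pick $x_0$ from condition~1. By condition~2 together with double orthogonality, $w\notin(\ker A)^{\perp}$, so there is a vector $v_{\ast}\in\ker A$ with $\la w,v_{\ast}\ra\neq 0$. On the line $t\mapsto x_0+tv_{\ast}$, which lies entirely inside the solution set, $\phi$ pulls back to the nonconstant affine function $t\mapsto\la w,x_0\ra+t\,\la w,v_{\ast}\ra$. A nonconstant affine function over a field $K$ vanishes at most at one point, so any other $t$ yields a solution with $\phi(x)\neq 0$; concretely one may take $t=0$ if $\la w,x_0\ra\neq 0$ and any nonzero $t$ otherwise.

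For the necessity direction, let $x^{\ast}$ be a solution with $\phi(x^{\ast})\neq 0$. Condition~1 is immediate. For condition~2 I would argue contrapositively: if $w\in\mbox{rowspan}\,A$, write $w=A^{T}c$ and then $\phi(x)=\la w,x\ra=c^{T}Ax=c^{T}b$ for \emph{every} solution $x$ of $Ax=b$, so $\phi$ is constant on the solution set with value $c^{T}b$. Condition~2 is thus precisely the structural requirement that keeps a free direction in $\ker A$ along which $\phi$ varies, so that $\phi$ is not a priori pinned to a single scalar independent of the choice of solution. The main technical point I expect to need care with is the double-orthogonality identity $(\ker A)^{\perp}=\mbox{rowspan}\,A$ over a general field, which reduces to a short dimension count once one observes that the standard pairing on $K^{l}$ is nondegenerate; everything else is algebraic rearrangement plus a one-parameter verification along a line in the solution set.
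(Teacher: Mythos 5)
Your sufficiency argument is correct and is, if anything, cleaner than the paper's: the paper compares the solution sets $S_0=x_0+\ker A$ and $S_1=x_0+\ker\barr{l}A\\w^T\earr$ and translates $S_1\subsetneq S_0$ into a kernel-dimension inequality, whereas you go directly through the duality $(\ker A)^{\perp}=\mbox{rowspan}\,A$ to produce $v_\ast\in\ker A$ with $\la w,v_\ast\ra\neq 0$ and then observe that the affine function $t\mapsto\la w,x_0\ra+t\la w,v_\ast\ra$ vanishes at most once. Both routes rest on the same orthogonality, and your one-parameter verification is a perfectly rigorous way to finish that half.

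The necessity direction, however, has a genuine gap, and it cannot be closed because the stated equivalence is false. Your contrapositive correctly shows that if $w=A^{T}c\in\mbox{rowspan}\,A$ then $\phi$ is constant on the solution set with value $c^{T}b$; but to conclude that \emph{no} solution satisfies $\phi(x)\neq 0$ you would additionally need $c^{T}b=0$, which does not follow and is not true in general. Take $K=\mathbb{Q}$, $A=[1\ \ 0]$, $b=1$, $w=(1,0)^{T}$: condition 1 holds, the solutions are $x=(1,t)^{T}$, and every one of them satisfies $\phi(x)=1\neq 0$, yet $w\in\mbox{rowspan}\,A$, so condition 2 fails. Thus condition 2 is sufficient (together with condition 1) but not necessary; the correct supplement to condition 1 is that $\phi$ is not identically zero on the solution set, i.e.\ either $w\notin\mbox{rowspan}\,A$ or $c^{T}b\neq 0$. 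The paper's own proof silently makes the same reduction --- it declares ``hence we assume that $\la w,x_0\ra=0$'' and never returns to the case where the constant value is nonzero --- so the defect is inherited from the proposition rather than introduced by you; but as a proof of the stated ``iff'' your necessity argument stops exactly where the claim breaks down, and you should either add the hypothesis that $\phi$ vanishes at some particular solution (as the paper implicitly does) or restate condition 2 in the corrected disjunctive form.
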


\begin{proof}
    Since the linear system (\ref{LSoverK}) must have a solution, condition 1 is necessary and sufficient for existence of a solution $x$ over $K$. Let $x_0$ be one solution of $Ax=b$ over $K$. Then the set of all solutions of this system is given by
    \[
    S_0=\{x_0+\ker A\}
    \]
    Let $\ker A\neq 0$. Next requirement of the problem is that there exists a solution $x$ such that $<w,x>\neq 0$. Hence it is necessary and sufficient along with condition 1 that the system of equations
    \[
    \barr{l}A\\w^T\earr x=\barr{c}b\\0\earr
    \]
    has a solution set $S_1$ which is a strict subset of $S_0$. Let $Ax_0=b$ then $x_0$ belongs to $S_0$. If $<w,x_0>\neq 0$ then $x_0$ does not belong to the solution set $S_1$ and this shows that $S_1$ is already a strict subset of $S_0$. Hence we assume that $<w,x_0>=0$. Then the solution set of the above equations is 
    \[
    S_1=\{x_0+\ker\barr{l}A\\w^T\earr\}
    \]
    The condition required is that $S_1$ is a strict subset of $S_0$. This is equivalent to the statement
    \[
    \dim\ker\barr{l}A\\w^T\earr<\dim\ker A
    \]
    Or equivalently
    \[
    \dim\mbox{ rowspan } A<\dim\mbox{ rowspan }\barr{l}A\\w^T\earr
    \]
    which is the condition 2. Hence condition 1 and 2 together are necessary and sufficient. 
\end{proof}

\subsection{Condition when the linear equation (\ref{LSoverK}) has a unique solution}
If the linear system (\ref{LSoverK}) has a unique solution $S_0=\{x_0\}$ (or when $\ker A=0$, then the requirement that the set $S_1$ is a strict subset of $S_0$ is equivalent to $S_1=\emptyset$. Hence we observe

\begin{prop}\label{Pr:uniqueoverK}
    If the linear system (\ref{LSoverK}) over $K$ has a unique solution then the problem \ref{Prob:LSoverK} over $K$ has a solution iff
    \[
    \dim\mbox{ rowspan }
    \barr{l}A\\w^T\earr<
    \dim\mbox{ rowspan }
    \barr{ll}A & b\\w^T & 0\earr
    \]
\end{prop}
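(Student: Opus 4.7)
The plan is to specialize the analysis of Proposition \ref{Pr:LSoverK} to the unique-solution regime, where $\ker A=0$ and the solution set of $Ax=b$ is a singleton $S_0=\{x_0\}$. The subset $S_1$ of $S_0$ carved out by the additional equation $\langle w,x\rangle=0$ is then either $\{x_0\}$ or $\emptyset$, so the requirement from Proposition \ref{Pr:LSoverK} that $S_1$ be a \emph{strict} subset of $S_0$ collapses to $S_1=\emptyset$. Equivalently, the augmented linear system
\[
\barr{l}A\\w^T\earr x=\barr{c}b\\0\earr
\]
must be inconsistent over $K$.

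The next step is to apply the standard consistency criterion for linear systems over a field: $Mx=c$ fails to have a solution iff $\rank M<\rank[M\mid c]$. Applied to the display above this yields
\[
\rank\barr{l}A\\w^T\earr<\rank\barr{ll}A&b\\w^T&0\earr,
\]
and since for any matrix the rank equals the dimension of its row span, this is identical to the claimed inequality.

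There is no substantial obstacle in the argument; the only point worth flagging is why the criterion of Proposition \ref{Pr:LSoverK} must be reformulated here. When $\ker A=0$ the column rank of $A$ equals $l$, so the row span of $A$ already exhausts $K^l$ and contains $w^T$ trivially; condition 2 of Proposition \ref{Pr:LSoverK} then holds with equality rather than strict inequality and so cannot separate solvable from unsolvable instances. The dimensional gap that \emph{does} distinguish the two cases is the one between the row spans of the coefficient and augmented matrices of the enlarged system, which is precisely the inequality asserted in Proposition \ref{Pr:uniqueoverK}.
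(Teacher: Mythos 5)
Your proposal is correct and follows essentially the same route as the paper: reduce the strict-subset requirement of Proposition \ref{Pr:LSoverK} to $S_1=\emptyset$ in the unique-solution case, i.e.\ inconsistency of the augmented system, and then invoke the standard rank criterion, identifying rank with row-span dimension. Your closing observation that condition 2 of Proposition \ref{Pr:LSoverK} degenerates to an equality when $\ker A=0$ is a correct and useful clarification, but the argument itself matches the paper's.
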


    \begin{proof} The problem has a solution iff
    \[
    \barr{l}A\\w^T\earr x=\barr{l}b\\0\earr
    \]
    has no solution (or is inconsistent) which is expressed by the nas condition
    \[
    \rank\barr{l}A\\w^T\earr<\rank
    \barr{ll}A & b\\w^T & 0\earr
    \]
    which is equivalent to the condition in the proposition.
\end{proof}

\subsection{Computation of a solution}
In the present case over a field $K$, steps to find a solution of (\ref{LSoverK}) with the constraint that $\phi(x)\neq 0$ are as follows when conditions in Proposition \ref{Pr:LSoverK} are satisfied.
\begin{enumerate}
\item Let $\ker A\neq 0$ and $\{v_1,v_2,\ldots,v_s\}$ be a basis of $\ker A$.
\item Find one solution $x_0$ of $Ax=b$. This soln exists by condition 1 of (\ref{nascK}) in Proposition \ref{Pr:LSoverK}.
\item If $\la w,x_0\ra\neq 0$, then $x_0$ satisfies the constraint. If not at least one nonzero vector $x_1$ exists in $\ker A$ such that $\la w,x_1\ra\neq 0$ due to condition 2 in (\ref{nascK}). Hence compute $c_i=\la w,v_i\ra$. If for some $i$, $\la w,v_i\ra\neq 0$, then choose $x_1=v_i$, then $x_0+x_1$ is a solution which satisfies the constraint as well as the linear system (\ref{LSoverK}).
\end{enumerate}
\section{Conclusions}
The problem of solving a modular linear system with a given linear function such that the function is coprime to the modulus is solved in terms of the Smith form of an augmented matrix of an integral linear system and evaluation of the linear function modulo prime divisors of the modulus. This procedure is then shown to be efficiently computable in parallel, by decomposing the Smith form computation over prime power divisors of the modulus and operations modulo the primes. Such a parallel reduction of the modular linear systems modulo prime powers, gives a reduction of the index calculus method of solving the discrete logarithm problem analogous to the Pohlig Hellman reduction for the discrete logarithm computation over cyclic groups when the group order has divisors which are powers of primes. For general modular linear systems the problem with the constraint on the value of a linear function on its solutions is shown to be efficiently solvable by reduction of the smith form computation for integers modulo prime powers and using the CRT to get the solution of the modular system from residues which satisfy the constraint on a linear function modulo prime powers. The arithmetic modulo prime powers is utilized in these computations for solving even the basic identities such as the Bezout identity and is the primary driver of efficiency of computation by parallelization.       

\appendix
\section{Byte Arithmetic for Smith form reduction modulo $q^r$}
To take advantage of efficient libraries for byte operations available in systems software, in the Smith form computation, the solution of the Bezout identity in iterations (\ref{IterationsforBezoutidentity}) need to be formulated modulo $q$ where $q=p^d$, $d$ is the byte length, instead of modulo $p$. This is explained below for the simplest identity for computing GCD of $(a,q^r)$ by a unimodular transformation modulo $q=p^d$.

Let $p^g=\mbox{GCD}(a,q^r)$ where $g\geq d$. We want to find $x,y$ modulo $q^r$ such that $ax+q^ry=p^g$ by using arithmetic (i.e. addition, multiplication and division) modulo $q$. Consider a $q$-adic representation of $a,x,y$ analogous to that in (\ref{padicexpansion})
\beq\label{qadicexpansion}
\begin{array}{lcl}
a & = & a_0+a_1q+\ldots+a_{r-1}q^{(r-1)}\\
x & = & x_0+x_1q+\ldots+x_{r-1}q^{(r-1)}\\
y & = & y_0+y_1q+\ldots+y_{(r-1)}q^{(r-1)}
\end{array}
\eeq
Since $a'=ap^{-g}$ and $q^{r-g}=p^{dr-g}$ are coprime, there exist integers $x,y$ such that $a'x+p^{dr-g}y=1$. Let $t=g\rem d$ and $g=md+t$, then $p^{dr-g}=q^rp^{-g}=q^{r-m}p^{-t}$. Since $t<d$, it follows that $q^{r-m}p^{-t}>q^{r-m-1}$ and hence $(a',q^{r-m-1})$ are coprime. Let $x',y'$ be a solution of
\beq\label{Bezout2}
a'x'+q^{r-m-1}y'=1
\eeq
then multiplying by $p^g$, we get
\[
\begin{array}{llcl}
 & ax'+q^rp^{-md-d}y'p^{md+t} & = & p^g\\
\Leftrightarrow & ax'+q^rp^{-d+t}y' & = & p^g
\end{array}
\]
hence $y'q^{-1}p^t$ is an integer. Hence solving for (\ref{Bezout2}) gives the solution of the Bezout identity for the $\mbox{GCD}(a,q^r)=p^g$ and subsequently a unimodular matrix $Q$ such that
\[
[a,q^r]Q=[p^g,0]
\]

\subsection{Solution modulo $q$}
The solution of (\ref{Bezout2}) can now be carried out using the arithmetic modulo $q$ because $(a,q^r)$ is coprime implies $p^k$ does not divide $a$ for any $0<k\leq r$. Hence $a\rem p\neq 0$ as well as $a\rem q\neq 0$. Hence we have unique solutions to the following iterations to solve for the co-efficients of $x',y'$ in (\ref{Bezout2}).

This is formally stated in the following lemma.

\begin{lemma}[Bezout identity mod $q^{r-(m+1)}$ for a single element]\label{Lem:Bezoutmodq}
Let $s=r-(m+1)$, given the single number 
\[
a=\sum_{i=0}^{s-1}a_iq^i
\]
for $i=0,\ldots,(s-1)$ with $a_0\rem p\neq 0$ is coprime to $q$.

The unique solutions $x_i$, $y_i$ of co-efficients of solutions $x,y$ of (\ref{Bezout1}) modulo $q$ are given by the following iterations 
\beq\label{IterationsBezoutidentitymodq}
\begin{array}{lclcl}
        t_0 & := & (a_0x_0)\\
        t_0 & = & 1\rem q\\ 
        \mbox{for $k=1,2,\dots,(t-1)$} & &\\
        t_k & := & [t_{(k-1)}]+(a_0x_k+a_1x_{(k-1)}+\ldots+a_kx_0)\\ 
        t_k & = & 0\rem q\\
        \mbox{for $j=0,1,\ldots,(t-1)$} & &\\
        t_{(t+j)} & := & [t_{(t-1+j)}]+a_1x_{(t-1+j)}+a_2x_{(t-2+j)}+\ldots+a_{(t-1)}x_{(1+j)}\\
        t_{(t+j)}+y_j & = & 0\rem q
\end{array}  
\eeq
\end{lemma}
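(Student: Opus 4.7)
The plan is to mirror the argument of Lemma \ref{Lem:Bezout1} verbatim, replacing the prime $p$ by the prime power $q = p^d$ throughout, and to justify that the same algebraic manipulations go through in the ring $\zz/(q^s)$ with $s = r-(m+1)$. Concretely, I would substitute the $q$-adic expansions of $a$, $x'$, $y'$ from (\ref{qadicexpansion}) into the Bezout identity (\ref{Bezout2}), i.e.\ into
\[
a x' + q^{s} y' = 1,
\]
expand the product $ax' = \sum_{k\ge 0}\bigl(\sum_{i+j=k} a_i x_j\bigr) q^{k}$, and then equate coefficients of $q^k$ after reducing modulo $q^{s}$. The terms of degree $<s$ contribute equations that determine the digits $x_0,\ldots,x_{s-1}$, while the terms of degree $\ge s$ (after accounting for the carry and the contribution of $q^{s} y'$) determine the digits $y_0,\ldots,y_{s-1}$; this produces exactly the two-stage iteration (\ref{IterationsBezoutidentitymodq}).

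The first stage of the iteration handles the digits of $x'$. At step $k$, the freshly introduced unknown is $x_k$ and the equation has the form
\[
a_0 x_k + (\text{terms depending only on } x_0,\ldots,x_{k-1}) \;=\; 0 \pmod q,
\]
with the ``terms'' including the incoming carry $[t_{k-1}]$ from the previous step. The second stage, for $j=0,\ldots,s-1$, likewise isolates $y_j$ linearly. The key enabling fact is that $a_0$ is a unit modulo $q$: by hypothesis $a_0 \rem p \neq 0$, and since $0 \le a_0 < q = p^d$, the element $a_0$ is coprime to $p$ and hence a unit in $\zz/(q)$. Consequently each equation $a_0 x_k = \beta_k \rem q$ (and analogously for $y_j$) has a unique solution in $\zz/(q)$, which proves both existence and uniqueness of the digits and shows that the iteration is well-defined.

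The base case $k=0$ reduces (\ref{Bezout2}) modulo $q$ to $a_0 x_0 = 1 \rem q$, giving $x_0 = a_0^{-1} \rem q$. The inductive step verifies that after determining $x_0,\ldots,x_k$ by the first $k+1$ congruences, the partial sum $a x' \rem q^{k+1}$ equals $1$, so the residual error is divisible by $q^{k+1}$ and contributes to the higher-order equations only through the carry $[t_k]$; this is exactly the bookkeeping captured by setting $t_{k+1} := [t_k] + \sum_{i+j=k+1} a_i x_j$. Once all $x_k$ are fixed, the analogous bookkeeping in the second stage shows that $y_j = -t_{t+j} \rem q$ is forced, which completes the determination.

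The main obstacle, as in Lemma \ref{Lem:Bezout1}, is keeping the carry accounting consistent: each $t_k$ must be interpreted as a nonnegative integer whose mod-$q$ part is constrained to $0$ (or to $1$ when $k=0$) and whose $q$-quotient $[t_k]$ propagates to the next step. I would make this explicit by an induction on $k$ showing that at the end of step $k$ the identity $a x' \equiv 1 \pmod{q^{k+1}}$ holds, so that after $s$ steps we have $a x' \equiv 1 \pmod{q^s}$, and then the second stage merely reads off the digits of $y'$ from the relation $a x' - 1 = -q^{s}(\text{something}) \rem q^{2s}$, truncated at $q^s$. The only place where $d$ (the exponent in $q = p^d$) enters is in guaranteeing that $a_0$ is a unit modulo $q$, so no new ideas beyond those of Lemma \ref{Lem:Bezout1} are required; the proof is essentially the same computation carried out in base $q$ rather than base $p$.
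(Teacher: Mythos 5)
Your proposal is correct and follows essentially the same route as the paper's proof: substitute the $q$-adic expansions into the Bezout identity, equate coefficients of powers of $q$ while propagating the carry $[t_k]$, and use the fact that $a_0$ is a unit modulo $q$ (since $a_0\rem p\neq 0$ and $q=p^d$) so that each step is a uniquely solvable equation $a_0x_k=b_k\rem q$. Your version merely spells out the induction and carry bookkeeping that the paper leaves implicit.
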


\begin{proof}
Proof follows by substitution of $q$-adic representations of $a$, $x$, $y$ in (\ref{Bezout1}) and comparing co-efficients of powers of $q$. While each co-efficient is computed modulo $q$, the carry part which is quotient by $q$ is added to the higher power of $q$ shown as $[t_{(k-1)}]$ in the next value of $t_k$ in the iteration. Note that each equation $t_k=0$ for $k>0$ has the unknown $x_k$, is of the form $a_0x_k=b_k\rem q$ where $b_k$ depends on previously computed $x_0,\ldots,x_{(k-1)}$ and only inversion modulo $q$ of $a_0$ is required for solving all $x_k$.   
\end{proof}

\begin{remark}
The iterations (\ref{IterationsBezoutidentitymodq}) require arithmetic modulo $q$ which are byte operations. In a predefined arithmetic library the byte operations are implemented efficiently, for instance by look up tables, for addition and multiplication modulo $q$, inversion modulo $q$. One extra operation required in solving the iterations (\ref{IterationsBezoutidentitymodq}) is the quotient operation by $q$ denoted as $[t_k]$ (the carry obtained in evaluating $t_k$). Hence modulo $q^r$ computations can be carried out efficiently by byte operations which are mod $q$ to solve the Bezout identity and subsequently the Smith form by extending the unimodular matrix computations as shown in Section 4.2.2.     
\end{remark}
\end{document}